\title{Stability of Correction Procedure via Reconstruction With Summation-by-Parts
       Operators for Burgers' Equation Using a Polynomial Chaos Approach}
\author{Philipp \"Offner, Jan Glaubitz, and Hendrik Ranocha}
\date{10th November 2018}
\theoremstyle{plain}
  \newtheorem{thm}{Theorem}[section]
\theoremstyle{definition}
  \newtheorem{ex}[thm]{Example}
  \newtheorem{re}[thm]{Remark}
\pgfplotsset{compat=1.11}
\renewcommand{\vec}[1]{\underline{#1}}
\NewDocumentCommand{\mat}{mo}{%
  \IfValueTF{#2}{%
    \underline{\underline{#1}}{#2}
  }{%
    \underline{\underline{#1}}\,
  }%
}
\newcommand{\diag}[1]{\operatorname{diag}\left(#1\right)}
\newcommand{\scp}[2]{\left\langle{#1,\, #2}\right\rangle}
\renewcommand{\L}{\mathbf{L}}
\newcommand{\E}{\operatorname{E}}
\newcommand{\Var}{\operatorname{Var}}
\newcommand{\I}{\operatorname{I}}
\newcommand{\fnum}{f^{\mathrm{num}}}
\newcommand{\vecfnum}{\vec{f}^{\mathrm{num}}}
\newcommand{\uref}{u_{\mathrm{Ref}}}
\newcommand{\unum}{u_{\mathrm{Num}}}
\renewcommand{\epsilon}{\varepsilon}
\renewcommand{\phi}{\varphi}
\renewcommand{\rho}{\varrho}
\newcommand{\N}{\mathbb{N}}
\newcommand{\R}{\mathbb{R}}
\renewcommand{\l}{\left}
\newcommand{\Omprob}{\Omega_\mathrm{prob}}
\renewcommand{\P}{\mathcal{P}}
\newcommand{\diff}{\mathrm{d}}
\newsavebox{\DelimiterBox}
\newlength{\DelimiterHeight}
\newlength{\DelimiterDepth}
\newsavebox{\ArgumentBox}
\newlength{\ArgumentHeight}
\newlength{\ArgumentDepth}
\newlength{\ResizedDelimiterHeight}
\newcommand{\mean}[1]{\overline{#1}}
\newcommand{\jump}[1]{%
  \savebox{\ArgumentBox}{$ \displaystyle #1 $}%
  \settoheight{\ArgumentHeight}{\usebox{\ArgumentBox}}%
  \settodepth{\ArgumentDepth}{\usebox{\ArgumentBox}}%
  \savebox{\DelimiterBox}{$ [\!\![ $}%
  \settoheight{\DelimiterHeight}{\usebox{\DelimiterBox}}%
  \settodepth{\DelimiterDepth}{\usebox{\DelimiterBox}}%
  \setlength{\ResizedDelimiterHeight}{\maxof{1.2\ArgumentHeight}{\DelimiterHeight}}
  \!
  \resizebox{\width}{\ResizedDelimiterHeight}{ [\![ }
  \mkern-6.5mu
  #1
  \mkern-6.5mu
  \resizebox{\width}{\ResizedDelimiterHeight}{ ]\!] }
  \!\!
}
\begin{document}

\maketitle

\begin{abstract}
  In this paper, we consider Burgers' equation with uncertain boundary and 
initial conditions.
The polynomial chaos (PC) approach yields a 
hyperbolic system of deterministic equations, which can be solved by
several numerical methods.
Here, we apply the correction procedure via reconstruction (CPR) using 
summation-by-parts operators.
We focus especially on stability, which is proven for 
CPR methods and the systems arising from the PC approach.
Due to the usage of split-forms, the major challenge is to construct entropy 
stable numerical fluxes. 
For the first time, such numerical fluxes are constructed for all systems 
resulting from the PC approach for Burgers' equation. 
In numerical tests, we verify our results and show also the performance of the
given ansatz using CPR methods. 
Moreover, one of the simulations, i.e. Burgers' equation equipped with an 
initial shock, demonstrates quite fascinating observations. 
The behaviour of the numerical solutions from several
methods (finite volume, finite difference, CPR) differ significantly from each
other.
Through careful investigations, we conclude that the reason for this is
the high sensitivity of the system to varying dissipation. 
Furthermore, it should be stressed that the system is not strictly hyperbolic 
with genuinely nonlinear or linearly degenerate fields.
\end{abstract}

\section{Introduction}
\label{sec:introduction}

In the last decades, great efforts have been made to develop accurate and stable
numerical schemes for partial differential equations such as hyperbolic
conservation laws.
 In practical applications, real world data are used as inputs which include measurement errors. Thus, one has to deal with uncertainties in the input data and, in general,
one distinguishes between numerical errors and these uncertainties.
The errors are strictly deterministic quantities, whereas the uncertainties are stochastic quantities.
Therefore, these uncertainties are treated within a probabilistic framework. In numerical simulations,
random variables are used to model the uncertainty in boundary and initial conditions, model
parameters or
even in the geometry of the investigated domain.
Norbert Wiener developed the polynomial chaos method (PC) in \cite{wiener1938homogeneous},
where he applied Hermite polynomials to model stochastic processes with Gaussian random variables.
Ghanem and Spanos \cite{ghanem2003stochastic} introduced the polynomial chaos method
for solving partial differential equations.
We also consider this approach, which will be explained in the next section.
The main idea of polynomial chaos is that by
using a spectral method ansatz one can transform the stochastic equations back
to a strictly deterministic system of equations, which can be solved by standard
numerical methods.
The theoretical foundation of the PC method is given by the Cameron-Martin-Theorem
\cite{cameron1947orthogonal}.
One can find many works in the literature about the PC method and its
applications, see \cite{xiu2003modeling, xiu2002wiener, ghanem2003stochastic,
xiu2004supersensitivity}
and references cited therein, but it was the work of Pettersson et al.
\cite{pettersson2009numerical} that initially sparked our interest.
In \cite{pettersson2009numerical}, the authors consider Burgers' equation with uncertain initial and boundary data.
The PC approach leads to a hyperbolic system of equations, which they solve
using finite difference (FD) schemes.
In this work, the hyperbolic systems resulting from the PC method are solved
by the recent correction procedure via reconstruction (CPR)
\cite{ranocha2016summation}, which unifies the flux reconstruction
\cite{huynh2007flux} and the lifting
collocation penalty \cite{wang2009unifying} schemes in a common framework.
In \cite{ranocha2016summation, ranocha2017extended}, the authors reformulated
CPR methods using summation-by-parts (SBP) operators. We apply this approach to prove
stability for our CPR methods similarly to the FD framework in
\cite{pettersson2009numerical}. The key in our investigation is the application of
entropy stable numerical fluxes.
In this work, an ansatz is presented to
construct these entropy stable numerical fluxes in the context of PC for
Burgers' equation using CPR methods.
We start with some examples, before presenting the general setting.
The usage of split forms in the same way as in
\cite{fisher2013high, carpenter2014entropy, carpenter2013high} will be essential.
This procedure works also for other hyperbolic systems, see
for example \cite{ranocha2017shallow}.
In this article, we focus on the system formed by the PC approach for Burgers'
equation.
We demonstrate our results and quantify the behaviours of CPR methods
in numerical simulations. Furthermore, we compare our CPR methods with finite
volume (FV) methods for different test cases including
an initial rarefaction and an initial shock, always with an uncertain perturbation.
The last numerical experiment, considering a shock wave, is most remarkable.
This test case was also treated in \cite{pettersson2009numerical,
pettersson2015polynomial} and thus allows a comparison of the numerical
solutions from CPR methods as well as FV methods with the ones from FD methods
described in \cite{pettersson2009numerical}.
The behaviours of the numerical solutions from several methods (FV, FD, CPR)
\emph{differ} significantly  from each other.
Through careful investigations, we conclude that the reason for this is that the
system is \emph{not} strictly hyperbolic with genuinely nonlinear or linearly
degenerate fields and thus seems to be highly sensitive to varying dissipation.

The paper is organised as follows.
The PC approach for Burgers' equation is briefly explained in section \ref{sec:pc}.
In section 3, we repeat the main ideas of the SBP CPR method from
\cite{ranocha2016summation, ranocha2017extended} and apply this method to the
system.
We discuss stability in this context for some examples, before we investigate 
the general setting.
Here, the major key is the usage of split forms similar
to \cite{fisher2013high, carpenter2013high}. In Theorem \ref{Theorem1}, we
prove conservation (across elements) and entropy stability of the SBP CPR method.
In section \ref{sec:reference-solutions}, we derive reference solutions for
different test cases, which we will study numerically in section \ref{sec:numerical-tests}.
Here, among other things, we demonstrate that we get entirely different wave profiles in the
solutions depending highly sensitively on numerical dissipation.
Finally, we summarise our results, discuss open problems, and
give an outlook on future work.

\section{Polynomial Chaos Methods}
\label{sec:pc}

In this section, we  explain the concept of the generalised polynomial chaos
method (gPC) and its applications to hyperbolic conservation laws, in particular
to Burgers' equation.
%
 We start by introducing the notation and some preliminaries about  random fields and random inputs
before we explain the gPC approach. We follow the notation of \cite{abgrall2017uncertainty, pettersson2009numerical,xiu2002wiener,
giesselmann2017posteriori}.
For more details about the gPC, we strongly recommend these works and references cited therein.


\subsection{Random Fields and Random Inputs}

Let $(\Omega_\mathrm{prob}, \mathcal{F}, \mathcal{P}) $ be a
probability space with sample space $  \Omega_\mathrm{prob}$ and a probability measure
$\mathcal{P}$  defined on the $\sigma$-algebra $\mathcal{F}$ of subsets of
$\Omega_\mathrm{prob}$. A second measurable space $(E,\mathcal{B})$
is considered, where $E$ is a Banach space and $\mathcal{B}(E)$ the corresponding Borel $\sigma$-algebra.
An $E-$valued random field is a mapping $X:\Omprob\to E$ such that $\{\omega \in \Omprob: X(\omega)\in B \}\in
\mathcal{F}$ for
any subset $B\in \mathcal{B}$, i.e. $X$ is a measurable mapping.
For $1\leq p\leq \infty$, the Bochner space $\L^p(\Omprob;E)$ of $p$-summable random variables $X$
equipped with the norm
\begin{equation}
 ||X||_{\L^p(\Omega_\mathrm{prob}, E)}:=
					\begin{cases}
                                        \left(\int_{\Omprob}||X(\omega)||_E^p \diff \P(\omega) \right)^\frac{1}{p}, & 1\leq p<\infty, \\
                                        \operatorname{essup}_{\omega\in \Omprob} ||X(\omega)||_E, & p =\infty,
                                        \end{cases}
\end{equation}
will be considered.
With this definition, we are able to describe the random inputs.
For example, one can model uncertainties in initial data \cite{mishra2012sparse},
flux functions \cite{mishra2016numerical}, and coefficients \cite{mishra2013multi} as random fields.

In this paper, we consider uncertain initial data. Therefore, we identify the uncertain initial
data as a random field $u_0$. In particular, we consider an $\L^p(D)$-valued
random field, where $D \subset \R$. We will further assume that the initial data has the form
$u_0(x,\omega)=u(x,\xi(\omega))$ on $D\times \Omprob$.
Here, $\xi:\Omprob\to \R$ is a real valued random variable. We denote by $y=\xi(\omega)$
the image of $\omega \in \Omprob$ under $\xi$. Also, we assume that the law of the real-
valued random variable $\xi$ is absolutely continuous with respect to the Lebesgue measure.
Then, there exists a density function
$\varrho:\R\to \R^+_0$ such that $\int_{-\infty}^\infty \varrho(y)\diff y =1$
and $\P(\xi(\omega)\in A)=\int_{A}\varrho(y)\diff y$,
for any $A \in \mathcal{B}(\R)$.

Before explaining the gPC approach, we make the convention that
we will also suppress the space-time variables $(x,t)$ of $u$ in the following
sections if it is clear from the context.

\subsection{Generalised Polynomial Chaos Method}

We are interested in the following scalar conservation
law with uncertain initial data
\begin{equation}\label{Conservation_law}
 \begin{aligned}
  \partial_t u(x,t,\xi(\omega))+ \partial_x f(u(x,t,\xi(\omega)))&=0,  &(x,t,\omega) \in D\times (0,T)\times \Omprob,\\
  u(x,0,\xi(\omega))&=u_0(x,\xi(\omega)), &(x,\omega) \in D\times \Omprob.
 \end{aligned}
\end{equation}
The solution of \eqref{Conservation_law} is a random field 
$u\in \L^2( \R ,\L^2 (D\times (0,T)), \mu )$   with probability distribution $\mu=\rho(y) \dif y$
that $u(\cdot,\cdot,\xi(\omega))$
is a weak solution\footnote{In  \cite{mishra2012sparse}, random entropy
solutions are introduced. Since we are using the gPC approach only to build
a strictly deterministic system which we will investigate later,
we do not go into details about random entropy solution in this paper.}
of \eqref{Conservation_law} for $\P$-a.e. $\omega\in \Omprob$.

As described in \cite{abgrall2017uncertainty, wiener1938homogeneous},
a random field  $u\in \L^2(\R,\L^2(D\times (0,T)), \mu) $
of \eqref{Conservation_law} can be expressed by the
spectral expansion
\begin{equation}
\label{SolutionPDE}
 u(x,t,y) =\sum\limits_{i=0}^\infty u_i(x,t)\phi_i(y),
\end{equation}
where $\phi_i \in \L^2(\R, \;\mu)$  are the basis functions
and
$\set{u_i(x,t)}_{i=0}^\infty $ is a set of coefficients.

To simplify the notation, we define
the expected value by
\begin{equation}
 \E[u(x,t,\xi(\cdot))] =\int_{\omega\in \Omega_\mathrm{prob}}
u(x,t,\xi(\omega))  \, \dif \mathcal{P}(\omega)\\
    = \int_{\R} u(x,t,y)  \, \varrho(y) \dif y.
\end{equation}
Moreover, the inner product of the Hilbert space for fixed time $t$ is given by
\begin{equation}
\label{eq:inner-product-fixed-t}
 \scp{u(t)}{v(t)} := \int_\R \int_D u(x,t,y) v(x,t,y) \varrho(y)
\dif y \dif x.
\end{equation}

For the numerical approximation, we truncate the infinite series \eqref{SolutionPDE}
and consider
\begin{equation}
  u^M(x,t,y) =\sum\limits_{i=0}^M u_i(x,t)\phi_i(y).
\end{equation}
The convergence of $u^M$ to $u$ as $M\to \infty$ is guaranteed by the Cameron-Martin theorem \cite{cameron1947orthogonal}.
Classically, orthogonal polynomials are chosen
 as basis functions\footnote{Haar Wavelet and multi-wavelet expansions
are  also possible, for details see \cite{pettersson2015polynomial}.}.
The distribution of $\xi $ determines  the polynomial family.
If $\xi$ is distributed by a Gaussian measure, Hermite polynomials provide the
best convergence results, for details see \cite{xiu2002wiener}.
In this paper, we consider normalised orthogonal polynomials and, in particular, normalised Hermite polynomials.
Therefore, we speak just about the polynomial chaos (PC) method.
Basic properties of these polynomials are cited in section \ref{Appendix:Hermite}.
In our numerical tests, we compare the first and second moments
(expected value $\E[u]$ and variance $\Var(u)$) of our calculated solutions with
the analytical solution. The two moments can be expressed by coefficients
of the PC method as
\begin{equation}
\begin{aligned}
 \E[u(x,t,\xi(\cdot))]
 =&
  \int_{ \R } \sum\limits_{i=0}^\infty u_i(x,t)\phi_i(y) \varrho(y) \dif y
 \\
 =&
 u_0(x,t) \int_{\R} \phi_0(y) \varrho(y)\dif y + \int_{\R
 } \sum\limits_{i=1}^\infty u_i(x,t)\phi_i(y) \varrho(y) \dif y
 =
 u_0(x,t).
\end{aligned}
\end{equation}
In the last step, we used that $\phi_i$ are orthogonal polynomials, $\phi_0 \equiv 1$, and
$\int_{\R} \varrho(y)\dif y=1$. The variance is given by
\begin{equation}\label{eq:variance}
 \Var(u(x,t,\cdot))=\E[u^2(x,t,\cdot)]-\E^2[u(x,t,\cdot)]=
 \int_{ \R} \sum\limits_{i=0}^\infty u_i^2(x,t)\phi_i^2 (y) \varrho(y) \dif y -u_0^2
 =
 \sum\limits_{i=1}^\infty u_i^2(x,t) \E[\phi_i^2]  .
\end{equation}

\subsection{PC Method for Burgers' Equation}

We utilise  the polynomial chaos expansion for Burgers' equation
\begin{equation}\label{Burgers}
\partial_t u(x,t,y)+u(x,t,y) \partial_x u(x,t,y)=0, \quad 0\leq x\leq 1.
\end{equation}
Inserting the representation \eqref{SolutionPDE} into equation \eqref{Burgers},
\begin{equation}\label{BurgerUQ}
  \sum\limits_{i=0}^\infty \frac{\partial u_i(x,t)}{\partial t} \phi_i(y)+
  \left(  \sum\limits_{i=0}^\infty  u_i (x,t) \phi_i(y)\right)
 \cdot \left( \sum\limits_{i=0}^\infty \frac{\partial u_i(x,t)}{\partial x} \phi_i(y) \right)=0.
\end{equation}
We employ a  stochastic Galerkin approach.
It relies on a weak formulation, where the set of trial functions is the same as
the space of stochastic test functions, i.e. Hermite polynomials in this case.
We multiply \eqref{BurgerUQ} by $\phi_k$ and integrate over $\Omega_\mathrm{prob}$ with
respect to the weight function (probability density)
$\varrho$, resulting in
\begin{equation}\label{BurgerUQ2}
\int_{\R} \phi_k(y)
\sum\limits_{i=0}^\infty \frac{\partial u_i(x,t)}{\partial t} \phi_i(y) \varrho(y) \dif  y+
\int_{\R} \phi_k(y)  \left(  \sum\limits_{i=0}^\infty  u_i(x,t) \phi_i(y)\right)
 \cdot \left( \sum\limits_{i=0}^\infty \frac{\partial u_i(x,t)}{\partial x} \phi_i(y) \right) \varrho(y) \dif y=0.
\end{equation}
We get a weak approximation of \eqref{BurgerUQ2}  by choosing a finite dimensional
subspace of the polynomial chaos expansion and projecting the resulting expression
onto this subspace spanned by the basis $\set{\phi_i(\cdot)}_{i=0}^M$.
Considering the truncated PC series and using the orthogonality of $\phi_i$, we get the
symmetric system of deterministic equations
\begin{equation}\label{BurgerGalerkinUQ}
 \partial_t u_k(x,t) \E[\phi_k^2]
  +
  \sum\limits_{i=0}^M \sum\limits_{j=0}^M u_i(x,t) \partial_x u_j(x,t) \E[ \phi_i\phi_j\phi_k]
  =
  0, \quad \text{ for } k=0,1,\dots,M, \;
\end{equation}
with the triple  product
\begin{equation}
 \E[ \phi_i\phi_j\phi_k ]
 =
 \int_\R  \phi_i(y)\phi_j(y)\phi_k(y)\varrho(y) \dif y.
\end{equation}
Equation \eqref{BurgerGalerkinUQ} can  be written in matrix form as
\begin{equation}\label{BurgerGalerkinUQ2}
  B\, \partial_t u(x,t) +A(u(x,t))\partial_x u(x,t)=0
 \qquad \text{ or } \qquad
 B\, \partial_t u(x,t) + \frac{1}{2 }\partial_x \left(A(u(x,t))u(x,t)\right)=0,
\end{equation}
where the matrices $B$ and $A(u)$ are defined by
\begin{equation}\label{MatrixA}
  [B]_{jk}= \E[\phi_k^2] \ \delta_{j,k}
  \qquad \text{and} \qquad
  [A(u(x,t))]_{jk}=\sum_{i=0}^M \E[\phi_i\phi_j\phi_k] u_i(x,t).
\end{equation}
In our theoretical investigations, we use the component-wise representation
\eqref{BurgerGalerkinUQ}.
We give the matrix representation as a comparison to the works
\cite{pettersson2015polynomial, pettersson2009numerical},
where the authors analyse the system \eqref{BurgerGalerkinUQ} in the FD framework. We mention the similarities between these two approaches later.
For the scalar conservation law \eqref{Burgers}, the PC approach yields a symmetric system \eqref{BurgerGalerkinUQ} for the coefficients.
Due to symmetry, the system is hyperbolic, see for details
\cite{xiu2010numerical,chertock2015operator}. Since we apply normalised polynomials,
the matrix $B$ is the identity matrix, i.e $ \E[\phi_k^2]\equiv 1$.
For a better understanding,
we repeat the following example from \cite{pettersson2009numerical}.

\begin{ex}
\label{BeispielM=2}
  For $M=2$, we employ the  basis of normalised Hermite polynomials.
  Using \eqref{Hermitetripel}, system \eqref{BurgerGalerkinUQ} reads
  \begin{equation}
    \begin{pmatrix}
      1 & 0 &0 \\
      0 & 1 &0 \\
      0 & 0 & 1
    \end{pmatrix}
    \begin{pmatrix}
      u_0 \\ u_1 \\u_2
    \end{pmatrix}_t
  +
  \underbrace{
    \begin{pmatrix}
      u_0  & u_1 & u_2 \\
      u_1 & u_0+\sqrt{2} u_2 & \sqrt{2}u_2\\
      u_2 & \sqrt{2}u_1 & u_0+2\sqrt{2} u_2
    \end{pmatrix}}_{= A(u)}
    \begin{pmatrix}
      u_0 \\ u_1 \\u_2
    \end{pmatrix}_x= 0.
    \end{equation}
Let $f=\frac{1}{2} A(u)u$ denote the flux function for this system.
  Considering the  $\L^2$ entropy $U= \sum_{i=0}^M u_i^2$, the corresponding
  \emph{entropy flux} is given by
  \begin{equation*}
    F= u^Tf-\psi = \frac{1}{2}\left( u_0^3+3u_0u_1^2 +3u_0u_2^2 +2 \sqrt{2}u_1^2 u_2+2 \sqrt{2}u_1 u_2^2 +2\sqrt{2} u_2^3\right),
  \end{equation*}
  where
  \begin{equation}
    \psi=\frac{1}{6} u_0^3 +\frac{1}{2}u_0u_1^2+\frac{1}{2}u_0u_2^2+ \frac{\sqrt{2}}{2} u_1^2u_2+\frac{\sqrt{2}2}{3}u_2^3,
  \end{equation}
  is the flux potential. It fulfils $(\partial_u \psi)^T=f$.
  Entropy stability with respect to this entropy corresponds to
  norm stability using the inner product \eqref{eq:inner-product-fixed-t}. More
  details about entropy stability are given in section~\ref{Subsec:Stability}.
  In \cite{pettersson2015polynomial}, this example is considered on an equidistant
  mesh in the FD framework.
  Later, we analyse general $M\in \N_0$ in a semidiscrete formulation for SBP
CPR methods.
\end{ex}

\section{Stability in the Semidiscrete Setting}
\label{sec:stability}

The problem \eqref{BurgerGalerkinUQ} is hyperbolic and strictly deterministic.
Therefore, well-known numerical techniques can be applied to ensure stable and
accurate solutions.
In \cite{pettersson2009numerical}, finite difference schemes are used and the
authors also employ the summation-by-parts (SBP) property in the FD framework to
show $\L^2$ stability.
Here, we consider  correction procedure via reconstruction (CPR) methods,
also known as flux reconstruction (FR).
The CPR is a framework of high order methods for conservation laws, unifying
some discontinuous Galerkin (DG), spectral difference, and spectral volume methods
with appropriate choice of parameters.
In \cite{ranocha2016summation, ranocha2017extended}, the concept of SBP operators
is transferred to CPR methods and this property is an important tool
to show $\L^2$ stability in this setting.
The choice of an adequate numerical flux $\fnum$ is a major tool to obtain stability.
Consequently, we focus on this issue in this article.
In the following section, we present a general approach to select an
entropy stable numerical flux for SBP CPR methods in the context of generalised polynomial chaos.
We formulate the semidiscretisation of this method and prove stability. The main idea is the usage of split forms similar to
\cite{fisher2013high}.

We start with a brief description of SBP CPR methods.
For a detailed introduction to the correction procedure via reconstruction methods
and the concept of summation-by-parts
operators, we recommend the articles \cite{ranocha2016summation, ranocha2017extended,
huynh2007flux, huynh2014high, svard2014review}.

\subsection{SBP CPR Methods}

The correction procedure via reconstruction is a semidiscretisation applying a
polynomial approximation on elements. To describe the main idea, we consider a
scalar, one-dimensional hyperbolic conservation law
\begin{equation}
\label{eq:scalar-CL}
  \partial_t u + \partial_x f(u) = 0,
\end{equation}
equipped with an adequate initial condition. For simplicity, periodic boundary
conditions (or a compactly supported initial condition) will be assumed.

The domain $D \subset \R$ is split into disjoint open intervals
$D_i \subset D$ such that $\bigcup_i \overline{D_i} = D$.
We transfer each element $D_i$ onto a standard element, which is in our case simply $(-1,1)$.
All calculations are conducted within this standard element.

The solution $u$ is approximated by a polynomial of degree $p \in \N_0$. In
the basic formulation, a nodal Lagrange basis is employed. Thus, the coefficients $\vec{u}$ of
$u$ are given by the nodal values
$\vec{u}_i = u(\zeta_i), i \in \set{0, \dots, p}$, where $-1 \leq \zeta_i \leq 1$
are interpolation points in $[-1,1]$. The flux $f(u)$ is also approximated by
a polynomial, where the coefficients are given by
$\vec{f}_i = f \left( \vec{u}_i \right) = f \left( u(\zeta_i) \right)$.
The divergence of $\vec{f}$ is $\mat{D} \vec{f}$, where we apply a discrete
derivative matrix $\mat{D}$. Since the solutions will probably have discontinuities
across elements, we will have this in the discrete flux too. To avoid this problem,
we introduce a numerical flux $\vecfnum$ and also a correction term using $\mat{M}[^{-1}] \mat{R}[^T] \mat{B}$
at the boundary nodes \cite{ranocha2016summation}.
Hence, the CPR method in one element reads
\begin{equation}
\label{eq:SBP CPR}
 \begin{aligned}
\partial_t \vec{u}
  =& - \mat{D} \vec{f}
    - \mat{M}[^{-1}] \mat{R}[^T] \mat{B}\left( \vecfnum - \mat{R} \vec{f} \right)\\
  =& -\vec{\mathrm{VOL}}-\vec{\mathrm{SURF}},
 \end{aligned}
\end{equation}
where the restriction matrix $\mat{R}$ performs interpolation to the boundary,
$\vec{\mathrm{VOL}}$ is the volume term (here: $\mat{D} \vec{f}$) and
$\vec{\mathrm{SURF}}$ is the surface term.

The vector $\vecfnum = \bigl( f^{\mathrm{num}, e}_L, f^{\mathrm{num}, e}_R \bigr)$
contains the numerical fluxes of the left and right hand side of the element $e$, which gives a
common flux on the boundary using values from both neighbouring elements. Indeed,
interpolating the numerical solution in the $e$-th element to the left and right hand
side yields the values $u^{(e)}_L$ and $u^{(e)}_R$, respectively. The numerical flux
$f^{\mathrm{num}, e}_L = f^{\mathrm{num}, e-1}_R$ between the elements $e-1$ and $e$
is computed using the values $u(-) = u_R^{(e-1)}$ at the right boundary of cell
$e-1$ and $u(+) = u_L^{(e)}$ at the left boundary of cell $e$, as visualised in
\autoref{fig:num-fluxes}. For simplification, if the upper index of the element does not generate misunderstanding, it will be omitted.
\begin{figure}
  \centering
  \includegraphics{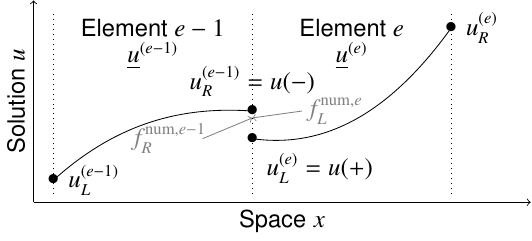}
  \caption{Notation used for numerical fluxes between elements.}
  \label{fig:num-fluxes}
\end{figure}

With respect to a chosen basis, the scalar product approximating the $\L^2$ scalar
product is represented by a matrix $\mat{M}$ and integration with respect to
the outer normal by $\mat{B} = \diag{-1,1}$. Finally, all operators are introduced and they
have to fulfil the SBP property
\begin{equation}
\label{eq:SBP}
  \mat{M} \mat{D} + \mat{D}[^T] \mat{M}
  = \mat{R}[^T] \mat{B} \mat{R},
\end{equation}
in order to mimic integration by parts on a discrete level
\begin{equation}
  \vec{u}^T \mat{M} \mat{D} \vec{v} + \vec{u}^T \mat{D}[^T] \mat{M} \vec{v}
  \approx
  \int_D u \, (\partial_x v) + \int_D (\partial_x u) \, v
  = u \, v \big|_{\partial D}
  \approx
  \vec{u}^T \mat{R}[^T] \mat{B} \mat{R} \vec{v}.
\end{equation}
Different bases can be used like  nodal Gauss-Legendre / Gauss-Lobatto-Legendre or
modal Legendre bases, as described in section \ref{sec:general}.
In the examples in section \ref{Subsec:Stability},
we choose Gauss-Lobatto-Legendre nodes, since this selection of CPR methods
is most similar to the FD setting. Thus, one recognises the similarities and differences
to the works of
Pettersson et al. \cite{pettersson2009numerical, pettersson2015polynomial}.

Again, in this paper we focus on numerical fluxes and present for the first time
(to the best of our knowledge) an approach to construct suitable (entropy
conservative and entropy stable) numerical fluxes for the PC using SBP CPR methods.

\subsection{Stability}
\label{Subsec:Stability}

We employ the normalised Hermite polynomials $\phi_i$ in the PC approach.
In general, stability of initial boundary value problems can be analysed similar
to \cite{nordstrom2017roadmap}, paying special attention to boundary conditions.
Here, we focus on energy/entropy conservative and dissipative numerical fluxes.
These will be linked to entropy conservative split forms of the volume terms
$\vec{\mathrm{VOL}}$ of SBP CPR methods as in \cite{fisher2013high}. Additionally,
they are used at the boundaries between two elements as in finite volume methods,
resulting in entropy conservative/stable semidiscretisations.

For sake of brevity, we introduce the mean value
$\mean{u}:= \frac{u(+)+u(-)}{2}$ and the jump $\jump{u} := u(+) - u(-)$ at a boundary between two elements.
For the system \eqref{BurgerGalerkinUQ}, the flux is given by
$f(u) = \frac{1}{2} A \, u$, where
$A_{ij} = \sum_{k = 0}^M  \E[\phi_i\phi_j\phi_k] u_k$.
Then, the system \eqref{BurgerGalerkinUQ} reads
\begin{equation}
\label{eq:Burgers_System2}
 \hat{\I} u_t+A u_x=0,
\end{equation}
with identity matrix $\hat{\I}$. The dimension
of the matrices depends on the selection of $M$, i.e.
$A(u) \in \R^{(M+1) \times (M+1)}$.
For stability, the $\L^2$ entropy
\begin{equation}
\label{eq:U}
  U = \frac{1}{2} |u|^2 = \frac{1}{2} \sum_{i=0}^M u_i^2
\end{equation}
is considered. It is a convex function of $u$ and the entropy variables are
simply $v = \partial_u U = u$,
i.e. the same as the conserved variables. The flux potential is given by
\begin{equation}
\label{eq:flux_potential}
 \psi
  =
  \frac{1}{6} u^T A \, u
  =
  \frac{1}{6} \sum_{i,j=0}^M u_i A_{ij} u_j
  =
  \frac{1}{6} \sum_{i,j,k=0}^M   \E[\phi_i\phi_j\phi_k] u_i u_j u_k.
\end{equation}
It fulfils
\begin{equation}
\begin{aligned}
  \partial_{u_l} \psi
  =
  \frac{1}{6} \sum_{i,j,k=0}^M \E[\phi_i\phi_j\phi_k]
  \left( \delta_{il} u_j u_k + u_i \delta_{jl} u_k + u_i u_j \delta_{kl} \right)
  =
  \frac{1}{2} \sum_{i,j=0}^M  \E[\phi_i\phi_j\phi_k] u_i u_j
  =
  \left[ f(u) \right]_l
\end{aligned}
\end{equation}
and can be used to construct the entropy flux
$F(u) = u^T f(u) - \psi(u)= \frac{1}{3} \sum_{i,j,k=0}^M
\E[\phi_i\phi_j\phi_k] u_i u_j u_k $, obeying
\begin{equation}
  \partial_u F(u)
  =
  \hat{\operatorname{I}} f(u) + u^T \partial_u f(u) - f(u)
  =
  u^T \partial_u f(u),
\end{equation}
see also Example \ref{BeispielM=2}.
Therefore, for a smooth solution $u$ of the conservation law
$\partial_t u + \partial_x f(u) = 0$, the entropy $U = \frac{1}{2} |u|^2$ fulfils
\begin{equation}
  \partial_t U(u)
  =
  \partial_u U(u) \partial_t u
  =
  - u^T \partial_x f(u)
  =
  - u^T \partial_u f(u) \partial_x u
  =
  - \partial_u F(u) \partial_x u
  =
  - \partial_x F(u).
\end{equation}
As a stability criterion, the entropy inequality $\partial_t U + \partial_x F \leq 0$
will be used. The numerical flux $\fnum$ is entropy stable in the sense of Tadmor
\cite{tadmor1987numerical, tadmor2003entropy}, i.e. in a semidiscrete scheme, if
\begin{equation}
\label{eq:fnum-EC-condition}
  \jump{u} \cdot \fnum \leq \jump{\psi},
\end{equation}
since the entropy variables are the same as the conserved variables $u$. The flux
$\fnum$ is entropy conservative if equality holds in \eqref{eq:fnum-EC-condition}.
 Condition \eqref{eq:fnum-EC-condition} considers especially the behaviour
of the numerical flux at the boundaries between two elements. We have to do the same in the following.
\begin{ex}\label{ex:FD}
To demonstrate the close connection to the FD framework, we consider first the matrix form \eqref{eq:Burgers_System2}.
For the extension of the SBP CPR methods to the system \eqref{eq:Burgers_System2}, we apply a tensor
product structure. $\otimes$  denotes the bilinear \emph{Kronecker product} and $A_G$ is the block
diagonal matrix, where the diagonal blocks are the symmetric matrices \eqref{MatrixA}.
If we apply the SBP CPR method for the system \eqref{eq:Burgers_System2} directly, we are
not able to prove stability similar to the FD framework. Therefore, we employ a skew-symmetric formulation
for \eqref{eq:Burgers_System2}. The resulting CPR method with Gauss-Lobatto-Legendre nodes
in one element $e$ reads
\begin{equation}
\label{eq:CPR_systemI}
   (\mat{\I}\otimes \hat{\I}) \partial_t \vec{u}
  + \frac{\beta}{2} (\mat{D}\otimes \hat{\I}) A_G \vec{u}
  + (1-\beta) \left( A_G(\mat{D}\otimes \hat{\I})\vec{u} \right)
                   + \left(\left(\mat{M}[^{-1}] \mat{R}[^T] \mat{B}\right)\otimes \hat{\I} \right) \left(
 \vecfnum - \frac{1}{2} (\mat{R} \otimes \hat{\I} ) A_G \vec{u} \right)
  =
  0,
\end{equation}
where $\vecfnum$ is the numerical flux
and $\vec{u}= \l(u_0( \zeta_0), \dots, u_0(\zeta_p),u_1(\zeta_0), \dots, u_M(\zeta_p)\right)^T$
is the combination vector from SBP CPR and polynomial chaos.
Investigating $\L^2$ stability, we multiply \eqref{eq:CPR_systemI} by
$\vec{u}^T (\mat{M}\otimes \hat{\I})$.
Applying the SBP property \eqref{eq:SBP}, $\beta=\frac{2}{3}$ and
simple calculations\footnote{Details of the calculation can be found in section
\ref{Appendix_Stability}.}, we get
\begin{equation}
\label{Gleichungst}
\begin{aligned}
  \frac{1}{2}\frac{\dif}{\dif t} \norm{u^{(e)}}^2_{\mat{M} \otimes \hat{\I}}
 =&
 \frac{1}{6} u^{(e),T}_R A(u^{(e)}_R) u^{(e)}_R -
 \frac{1}{6} u^{(e),T}_LA(u^{(e)}_L)u^{(e)}_L
 +u^{(e),T}_Lf^{\mathrm{num}, e}_L
 -u^{(e),T}_R f^{\mathrm{num}, e}_R.
\end{aligned}
\end{equation}
The value describes the change of the energy/entropy in one element.
To get the rate of change of the total entropy, the contributions
from all elements have to be summed up. Since the volume terms are entropy
conservative, i.e. only boundary terms remain at the right hand side of
\eqref{Gleichungst}, the behaviour at the element boundaries is essential.
We consider now two neighbouring elements as in Figure~\ref{fig:num-fluxes}.
Adding the contributions of the elements $e-1$ and $e$, we get
\begin{equation}
\label{Gleichungst_2}
\begin{aligned}
  \frac{1}{2}\frac{\dif}{\dif t} \norm{u^{(e-1,e)}}^2_{\mat{M} \otimes \hat{\I}}
 =&
\frac{1}{6} u^{(e-1),T}_R A(u^{(e-1)}_R) u^{(e-1)}_R -\frac{1}{6} u^{(e-1),T}_LA(u^{(e-1)}_L)u^{(e-1)}_L
 +u^{(e-1),T}_Lf^{\mathrm{num}, e-1}_L\\
 -&u^{(e-1),T}_R f^{\mathrm{num}, e-1}_R
 + \frac{1}{6} u^{(e),T}_R A(u^{(e)}_R) u^{(e)}_R -\frac{1}{6} u^{(e),T}_LA(u^{(e)}_L)u^{(e)}_L
 +u^{(e),T}_Lf^{\mathrm{num}, e}_L
 -u^{(e),T}_R f^{\mathrm{num}, e}_R.
\end{aligned}
\end{equation}
$u_R^{(e-1)}=u(-)$ and $u_L^{(e)}=u(+)$ are located at the same boundary. Since
the numerical flux is unique at every boundary, i.e. $f^{\mathrm{num}, e-1}_R = f^{\mathrm{num}, e}_L=f^\mathrm{num}$, we can reformulate the terms at the common
boundary in \eqref{Gleichungst_2} as
\begin{equation}\label{Gleichungstabilitaet1}
 \begin{aligned}
  &\underbrace{\frac{1}{6} u^{(e-1),T}_R A(u^{(e-1)}_R) u^{(e-1)}_R}_{\psi(-)}-\underbrace{u^{(e-1),T}_R}_{=u(-)} \underbrace{f^{\mathrm{num}, e-1}_R}_{=\fnum}
-\underbrace{\frac{1}{6} u^{(e),T}_LA(u^{(e)}_L)u^{(e)}_L}_{\psi(+)}
 + \underbrace{u^{(e),T}_L}_{=u(+)} \underbrace{f^{\mathrm{num}, e}_L}_{=\fnum} \\
 =&\psi(-)-\psi(+)+ u(+) \cdot f^\mathrm{num}-u(-) \cdot \fnum = \jump{u} \cdot \fnum - \jump{\psi}.
  \end{aligned}
\end{equation}
According to \eqref{eq:fnum-EC-condition}, this value is smaller than or equal
to zero for an entropy stable numerical flux. Applying this approach for every
boundary between two elements, we get that with an entropy stable numerical flux
the change of the total energy is
\begin{equation}
   \frac{1}{2}\frac{\dif}{\dif t} \sum_{r} \norm{u^{(r)}}^2_{\mat{M} \otimes \hat{\I}}\leq 0
\end{equation}
in a periodic setting or with compactly supported initial data. Otherwise,
boundary terms remain on the right hand side. The investigation of stable
boundary conditions goes beyond the scope of the present work.

\end{ex}

\begin{ex}
  For the PC order $M=0$, we get $A(u)=u_0$ and the flux function $f(u)=\frac{1}{2}u_0^2$.
  Thus, \eqref{Gleichungstabilitaet1} yields
  \begin{equation}
    \jump{u} \cdot \fnum - \jump{\psi}
   = \jump{u_0} \cdot \fnum-\frac{1}{6}\jump{u_0^3}.
  \end{equation}
  This is exactly the case of Burgers' equation discussed in \cite{ranocha2016summation}.
  Classical numerical fluxes as Osher's flux or local Lax-Friedrichs flux are entropy stable.
\end{ex}

\begin{ex}
  For $M=1$, we have
  \begin{equation}
    A=
    \begin{pmatrix}
    u_0 & u_1 \\
    u_1 & u_0
    \end{pmatrix}
    \text{ and }
    u=\begin{pmatrix}
    u_0 \\
    u_1
    \end{pmatrix}.
  \end{equation}
  The flux function is given by
  $f=  \begin{pmatrix} f_0 \\ f_1 \end{pmatrix}
  = \begin{pmatrix} \frac{1}{2}\left(u_0^2+u_1^2 \right) \\ u_0 u_1 \end{pmatrix}$.
 Now we have several possibilities to determine $\fnum$ in an adequate way.

  One way\footnote{See section \ref{Sub:General}
  for another.} is to approximate the mixed terms in the second component
  $f_1$ by the average of
  $u_0$ and $u_1$ and to replace $\frac{1}{2} u_1^2$ also by an analogous average.
  Then, the numerical flux is given by
  \begin{equation}
    \fnum
    =
    \begin{pmatrix}
      f_{00}+ \frac{1}{2}\overline{u_1^2} \\
      \mean{u_0} \cdot \overline{u_1}
    \end{pmatrix},
  \end{equation}
  where $f_{00}$ depends only on $u_0$.
  Inserting this $\fnum$ in \eqref{Gleichungstabilitaet1} yields
  \begin{equation}
  \begin{aligned}
    &
     \jump{u} \cdot \fnum - \jump{\psi}
    \\=&
    \frac{ u_0^3(-)-u_0^3(+) }{6} + \frac{ u^2_1(-)u_0(-)-u_1^2(+)u_0(+) }{2}
    +
    \jump{u_0} \left(f_{00} + \frac{ u_1^2(+)+u_1^2(-) }{4} \right)
    + \jump{u_1} \mean{u_0} \cdot \mean{u_1}
    \\=&
    \frac{ u_0^3(-)-u_0^3(+) }{6} - \jump{u_0}\frac{ u_1^2(-)+u_1^2(+) }{4}
    +\jump{u_0}\left(f_{00}+\frac{ u_1^2(+)+u_1^2(-) }{4} \right)
    = f_{00} \jump{u_0} - \frac{1}{6} \jump{u_0^3}.
  \end{aligned}
  \end{equation}
  This is exactly the same as for $M=0$.
  If we choose a classical numerical flux as before, we get entropy stability.
\end{ex}

\begin{ex}
  We obtain for $M=2$
  \begin{equation}
    A(u)
    =
    \begin{pmatrix}
      u_0  & u_1 & u_2 \\
      u_1 & u_0+\sqrt{2} u_2 & \sqrt{2}u_1\\
      u_2 & \sqrt{2}u_1 & u_0+2\sqrt{2} u_2
    \end{pmatrix},
  \end{equation}
  and the flux function is
  \begin{equation}
  \label{eq:M=2}
    f(u)=
    \frac{1}{2}
    \begin{pmatrix}
      u_0^2+u_1^2+u_2^2 \\
      2u_0u_1+2\sqrt{2}u_1u_2\\
      2u_0u_2+\sqrt{2}u_1^2+2\sqrt{2}u_2^2
    \end{pmatrix}
    =
    \begin{pmatrix}
      \frac{u_0^2+u_1^2+u_2^2}{2} \\
      u_0u_1+\sqrt{2}u_1u_2\\
      u_0u_2+\frac{1}{\sqrt{2}}u_1^2+\sqrt{2}u_2^2
    \end{pmatrix}.
  \end{equation}
  For the numerical flux function, we choose
  \begin{equation}
  \label{eq:M=2f_num}
    \fnum
    =
    \begin{pmatrix}
      f_{00}+\frac{1}{2} \overline{u_1^2} + \frac{1}{2}\overline{u_2^2} \\
      \mean{u_0}\;\overline{u_1}+\sqrt{2}\overline{u_1}\;\overline{u_2}\\
      2\sqrt{2} f_{22}+\mean{u_0}\;\overline{u_2} +\frac{\sqrt{2}}{2}\overline{u_1^2}
    \end{pmatrix},
  \end{equation}
  where we replaced in \eqref{eq:M=2} the values $u_0, \dots, u_2$ in all mixed terms
  and the squares $u_i^2$  by their mean values $\overline{u}$. $f_{00}$ and
  $f_{22}$ are again classical numerical flux functions like a local Lax-Friedrichs flux
  and they depend only on $u_i$.
  Employing the numerical flux function \eqref{eq:M=2f_num} in
  \eqref{Gleichungstabilitaet1}, we obtain by simple calculations
  \begin{equation}
   \jump{u} \cdot \fnum - \jump{\psi}
    =
    f_{00}\jump{u_0}-\frac{1}{6}\jump{u_0^3}
    +2\sqrt{2}\left(f_{22}\jump{u_2}-\frac{1}{6}\jump{u_2^3} \right).
  \end{equation}
  This is twice the Burgers' case from $M=0$, one time for $f_{00}$ and another
  time for $f_{22}$. If we choose  classical numerical fluxes for $f_{00}$ and
  $f_{22}$, we can ensure stability.
\end{ex}

\begin{ex}
  $M=3$ results in
  \begin{equation}
    A(u)
    =
    \begin{pmatrix}
      u_{0} & u_{1} & u_{2} & u_{3}
      \\
      u_{1} & u_{0} + \sqrt{2} u_{2} & \sqrt{2} u_{1} + \sqrt{3} u_{3} & \sqrt{3} u_{2}
      \\
      u_{2} & \sqrt{2} u_{1} + \sqrt{3} u_{3} & u_{0}
      + 2 \sqrt{2} u_{2} & \sqrt{3} u_{1} + 3 \sqrt{2} u_{3}
      \\
      u_{3} & \sqrt{3} u_{2} & \sqrt{3} u_{1} + 3 \sqrt{2} u_{3} & u_{0} + 3 \sqrt{2} u_{2}
    \end{pmatrix},
  \end{equation}
  and the flux function is given by
  \begin{equation}\label{eq:M=3flux}
    f(u)
    =
    \begin{pmatrix}
      \frac{u_{0}^{2}}{2} + \frac{u_{1}^{2}}{2} + \frac{u_{2}^{2}}{2}
      + \frac{u_{3}^{2}}{2}
      \\
      u_{0} u_{1} + \sqrt{2} u_{1} u_{2} + \sqrt{3} u_{2} u_{3}
      \\
      u_{0} u_{2} + \frac{\sqrt{2} u_{1}^{2}}{2} + \sqrt{3} u_{1} u_{3}
      + \sqrt{2} u_{2}^{2} + \frac{3 \sqrt{2}}{2} u_{3}^{2}
      \\
      u_{0} u_{3} + \sqrt{3} u_{1} u_{2} + 3 \sqrt{2} u_{2} u_{3}
    \end{pmatrix}.
  \end{equation}
  If we replace in the flux function \eqref{eq:M=3flux} all mixed terms and the
  squares by their simple means, we are not able to prove stability. The reason is
  the product of $u_j u_k$ in the $i$-component, where $k,j,i$ are distinct,
  see for example  $u_1 u_2$ in the last column of \eqref{eq:M=3flux}.
  These products provide problematic terms in the calculation of
  \eqref{Gleichungstabilitaet1} and it is not obvious, how to deal with these terms.
  In order to avoid this, we use the following trick:

  We apply a skew- symmetric form similar to \cite{fisher2013high}.
  We approximate $u_j  u_k$ with $\frac{1}{3} \left( \mean{u_j u_k} +2 \mean{u_j}\; \mean{u_k} \right)$
  in the $i$-th component, where $k,j,i$ are distinct.
  We handle the unproblematic terms as before and insert in the numerical flux
  function only  means of them. Then, the numerical flux reads as
  \begin{equation}\label{eq:M=3_fnum}
    f^\mathrm{num}
    =
    \begin{pmatrix}
      f_{00}
      + \frac{1}{2} \overline{ u_{1}^{2} }
      + \frac{1}{2} \overline{ u_{2}^{2} }
      + \frac{1}{2} \overline{ u_{3}^{2} }
      \\
      \overline{u_{0}} \cdot \overline{u_{1}}
      + \sqrt{2} \overline{u_{1}} \cdot \overline{u_{2}}
      + \sqrt{3} \left( \frac{1}{3} \overline{u_2 u_3} + \frac{2}{3} \overline{u_2} \cdot \overline{u_3} \right)
      \\
    \mean{u_0} \cdot \overline{u_2}
      + \frac{\sqrt{2}}{2} \overline{ u_{1}^{2} }
      + \sqrt{3} \left( \frac{1}{3} \overline{u_1 u_3} + \frac{2}{3} \overline{u_1}\cdot \overline{u_3} \right)
      + 2 \sqrt{2} f_{22}
      + \frac{3 \sqrt{2}}{2} \overline{ u_{3}^{2} }
      \\
      \mean{u_0} \cdot \overline{u_3}
      + \sqrt{3} \left( \frac{1}{3} \overline{u_1 u_2} + \frac{2}{3} \overline{u_1} \cdot \overline{u_2} \right)
      + 3 \sqrt{2} \overline{u_2} \cdot\overline{u_3}
    \end{pmatrix}.
  \end{equation}
  Finally, we obtain
  \begin{equation}\label{eq:M=3_en}
  \begin{aligned}
     \jump{u} \cdot \fnum - \jump{\psi}
    =&
    f_{00} \jump{ u_0 } - \frac{1}{6} \jump{ u_{0}^{3} }
    + 2 \sqrt{2} \left( f_{22} \jump{ u_2 } - \frac{1}{6} \jump{ u_{2}^{3} } \right).
  \end{aligned}
  \end{equation}
  This is analogous to $M=2$ and stability is ensured, if we choose a local
  Lax-Friedrichs flux or Osher's flux for $f_{00}$ and $f_{22}$.
\end{ex}

\begin{re}
  Furthermore, numerical calculations up to $M=9$ show that, if we apply this
  ansatz from $M=3$, where we replace the different $u_i$ by their means and / or
  using a split-form and also apply the entropy conservative flux
  \begin{equation}
    f_{ii}=\frac{1}{6}\overline{u_i^2}+\frac{1}{3}\mean{u_i}^2,
  \end{equation}
  we always obtain an entropy conservative numerical fluxes. This procedure should
  work in general.
\end{re}

\subsection{Construction of Numerical Fluxes}
\label{Sub:General}

Here, we  analyse the numerical flux function for any order $M\in \N_0$.
For the first time, we present a general approach to determine an entropy conservative numerical flux
in the context of polynomial chaos using SBP CPR methods. We
analyse inequality \eqref{eq:fnum-EC-condition}  in the componentwise setting,
where the flux potential is given by
equation \eqref{eq:flux_potential} as
\begin{equation}
 \psi
  =
  \frac{1}{6} u^T A \, u
  =
  \frac{1}{6} \sum_{i,j=0}^M u_i A_{ij} u_j
  =
  \frac{1}{6} \sum_{i,j,k=0}^M  \E[\phi_i\phi_j\phi_k] u_i u_j u_k.
\end{equation}

For the investigation of \eqref{eq:fnum-EC-condition}, we need a discrete analogue
of the product rule. For two variables, it is
\begin{equation}
\label{eq:product-2}
\begin{aligned}
  \jump{u_i u_j}
  &=
  u_i(+) u_j(+)- u_i(-) u_j(-)
  \\&=
  \frac{u_i(+) + u_i(+)}{2} (u_j(+) - u_j(-)) + (u_i(+) - u_i(-)) \frac{u_j(+) + u_j(-)}{2}
  \\&=
  \mean{u_i} \jump{u_j} + \jump{u_i} \mean{u_j}.
\end{aligned}
\end{equation}
For three variables, it can be written as
\begin{equation}
\label{eq:product-3}
\begin{aligned}
  \jump{u_i u_j u_k}
  =&
  \mean{u_i} \jump{u_j u_k} + \jump{u_i} \mean{u_j u_k}
  \\=&
  \mean{u_i} \cdot \mean{u_j} \jump{u_k} + \mean{u_i} \jump{u_j} \mean{u_k}
  + \jump{u_i} \mean{u_j u_k}
  \\=&
  \mean{u_j} \cdot \mean{u_k} \jump{u_i} + \mean{u_j} \jump{u_k} \mean{u_i}
  + \jump{u_j} \mean{u_k u_i}
  \\=&
  \mean{u_k} \cdot \mean{u_i} \jump{u_j} + \mean{u_k} \jump{u_i} \mean{u_j}
  + \jump{u_k} \mean{u_i u_j}
  \\=&
    \left( \frac{1}{3} \mean{u_i u_j} + \frac{2}{3} \mean{u_i}  \cdot \mean{u_j} \right)
    \jump{u_k}
  + \left( \frac{1}{3} \mean{u_j u_k} + \frac{2}{3} \mean{u_j} \cdot \mean{u_k} \right)
    \jump{u_i}
  + \left( \frac{1}{3} \mean{u_k u_i} + \frac{2}{3} \mean{u_k} \cdot \mean{u_i} \right)
    \jump{u_j}.
\end{aligned}
\end{equation}
The first two equalities are obtained by using \eqref{eq:product-2}, the following
equalities by cyclic permutation of the indices and the last equality by averaging
these three forms.
Thus, the jump of the flux potential $\psi$ can be written as
\begin{equation}\label{eq:prduct_psi}
\begin{aligned}
 &
  \jump{\psi}
  =
  \frac{1}{6} \sum_{i,j,k=0}^M \E[\phi_i\phi_j\phi_k]
  \jump{u_i u_j u_k}
  \\&
  =
  \frac{1}{6} \sum_{i,j,k=0}^M  \E[\phi_i\phi_j\phi_k]  \left(
    \left( \frac{1}{3} \mean{u_i u_j} + \frac{2}{3} \mean{u_i} \cdot \mean{u_j} \right)
    \jump{u_k}
    + \left( \frac{1}{3} \mean{u_j u_k} + \frac{2}{3} \mean{u_j} \cdot \mean{u_k} \right)
    \jump{u_i}
    + \left( \frac{1}{3} \mean{u_k u_i} + \frac{2}{3} \mean{u_k} \cdot \mean{u_i} \right)
    \jump{u_j}
  \right)
  \\&
  =
  \frac{1}{2} \sum_{i,j,k=0}^M \E[\phi_i\phi_j\phi_k]
  \left( \frac{1}{3} \mean{u_i u_j} + \frac{2}{3} \mean{u_i} \cdot \mean{u_j} \right) \jump{u_k}.
\end{aligned}
\end{equation}
Therefore, defining the numerical flux as
\begin{equation}
\label{eq:fnum-EC}
  [\fnum]_k
  =
  \frac{1}{2} \sum_{i,j=0}^M  \E[\phi_i\phi_j\phi_k]
  \left( \frac{1}{3} \mean{u_i u_j} + \frac{2}{3} \mean{u_i}\cdot  \mean{u_j} \right),
\end{equation}
it is entropy conservative, i.e. it fulfils $\jump{u} \cdot \fnum = \jump{\psi}$.

Before we consider \eqref{eq:fnum-EC} in the semidiscrete formulation of our SBP CPR method,
we mention some properties of the entropy conservative flux \eqref{eq:fnum-EC}.
\begin{itemize}
 \item
Of course, the entropy conservative flux is not
unique if $M \geq 1$, i.e. if not only a scalar problem is considered.
For a scalar problem, the canonical entropy conservative flux is given by
\begin{equation}
  \fnum_{M=0}
  =
  \frac{\jump{\psi}}{\jump{u}}
  =
  \frac{1}{6} \mean{u_0^2} + \frac{1}{3} \mean{u_0}^2,
\end{equation}
which is the entropy conservative flux for Burgers' equation with the $\L^2$
entropy, used inter alia in  \cite{gassner2013skew}.
\item
The numerical flux \eqref{eq:fnum-EC} is also the entropy conservative numerical flux of
Tadmor \cite[Equation (4.6a)]{tadmor1987numerical}, obtained by integration in
phase space:
\begin{equation}
\begin{aligned}
  &
  [\fnum]_k(u(-), u(+))
  =
  \left[ \int_0^1 f\left( (1-s) u(-) + s u(+) \right) \dif s \right]_k
  \\
  =&
  \sum_{i,j=0}^M  \frac{  \E[\phi_i\phi_j\phi_k] }{2}
  \int_0^1 \left( (1-s) u_i(-) + s u_i(+)  \right) \left( (1-s) u_j(-) + s u_j(+)  \right) \dif s
  \\
  =&
  \sum_{i,j=0}^M  \frac{  \E[\phi_i\phi_j\phi_k] }{2}
  \int_0^1 \left(
    (1-s)^2 u_i(-) u_j(-) + s(1-s) (u_i(-) u_j(+) + u_i(+) u_j(-) ) + s^2 u_i(+) u_j(+)
  \right) \dif s\\
  =&
  \frac{1}{2} \sum_{i,j=0}^M   \E[\phi_i\phi_j\phi_k]
  \left(
    \frac{1}{3} u_i(-) u_j(-) + \frac{1}{6} (u_i(-) u_j(+) + u_i(+) u_j(-) )
    + \frac{1}{3} u_i(+) u_j(+)
  \right)
  \\
  =&
  \frac{1}{2} \sum_{i,j=0}^M   \E[\phi_i\phi_j\phi_k]
  \left(
    \frac{1}{3} \mean{u_i u_j} + \frac{2}{3} \mean{u_i} \cdot \mean{u_j}
  \right).
\end{aligned}
\end{equation}
\end{itemize}

In section \ref{Subsec:Stability}, we consider the skew-symmetric form \eqref{eq:CPR_systemI}
with $\beta=\frac{2}{3}$. Using a subcell flux differencing form for a nodal diagonal-norm
SBP basis introduced in \cite{fisher2013discretely, fisher2013high} and applied in \cite{gassner2016split},
we are able to recover it.
 To study entropy stability of the general setting (i.e. nodal bases
not including boundary nodes) in the semidiscrete formulation,
we need a description of the volume terms of the SBP semidisretisation.
Here, $[\mathrm{VOL}_k]_n$ is the $n$-th entry of the volume term  of the component $k$, where
$n \in \{0,1,\dots,p\}$ and $k\in \{0,1,\dots,M \} $.
For example $[\mathrm{VOL}_0]_n$ describes the volume term of the component $u_0$
at $\zeta_n$ for $n \in \{0,1,\dots,p\}$.
We insert the entropy conservative flux function \eqref{eq:fnum-EC} in the general flux
differencing formulation.
In the following, the first index of $u_{i,m}$ indicates the component $u_i$ of $u$
and the second index the spatial location $\zeta_m$ at which $u_i$ is evaluated.
Finally, we get
\begin{align*}
\stepcounter{equation}\tag{\theequation}
\label{eq:fnum-EC-volume-terms}
  \left[ \mathrm{VOL}_k \right]_n
  =&
  \sum_{m=0}^p 2 \mat{D}[_{n,m}] \frac{1}{2} \sum_{i,j=0}^M   \E[\phi_i\phi_j\phi_k]
  \left(
      \frac{1}{3} \frac{u_{i,m} u_{j,m} + u_{i,n} u_{j,n}}{2}
    + \frac{2}{3} \frac{u_{i,m} + u_{i,n}}{2}  \frac{u_{j,m} + u_{j,n}}{2}
  \right)
  \\
  =&
  \sum_{i,j=0}^M   \E[\phi_i\phi_j\phi_k] \sum_{m=0}^p \mat{D}[_{n,m}]
  \left(
      \frac{1}{6} u_{i,m} u_{j,m}
    + \frac{1}{6} \left( u_{i,m} + u_{i,n} \right) \left( u_{j,m} + u_{j,n} \right)
  \right)
  \\
  =&
  \frac{1}{6} \sum_{i,j=0}^M   \E[\phi_i\phi_j\phi_k] \sum_{m=0}^p \mat{D}[_{n,m}]
  \left(
      2 u_{i,m} u_{j,m} + u_{i,m} u_{j,n} + u_{i,n} u_{j,m}
  \right)
  \\
  =&
  \frac{1}{3} \sum_{i,j=0}^M   \E[\phi_i\phi_j\phi_k] \sum_{m=0}^p \mat{D}[_{n,m}]
  \left(
      u_{i,m} u_{j,m} + u_{i,m} u_{j,n}
  \right)
  \\
  =&
  \left[
    \frac{1}{3} \sum_{i,j=0}^M   \E[\phi_i\phi_j\phi_k]
    \left( \mat{D} \vec{u_i u_j} + \mat{u_j} \mat{D} \vec{u_i} \right)
  \right]_n.
\end{align*}
The exactness of
the derivative for constants $\mat{D} \vec{1} = 0$ has been used, resulting in
$\sum_{m=0}^p \mat{D}[_{n,m}] = 0$, and the symmetry with respect to the indices $i,j$
has been exploited.
We require this volume term \eqref{eq:fnum-EC-volume-terms} in our discretisation of the SBP-CPR method
to prove entropy stability and conservation (across elements) in Theorem \ref{Theorem1}.

For entropy conservative fluxes $\fnum_\mathrm{ec}$, i.e.
$\jump{u} \cdot \fnum_\mathrm{ec} = \jump{\psi}$,
spurious oscillations in the numerical solution typically get
quite strong.
Therefore, the entropy conservative flux $\fnum_\mathrm{ec}$ at the boundaries
typically gets equipped with a dissipative term $-\frac{1}{2} Q \jump{u}$, i.e.
\begin{equation}
  \fnum = \fnum_\mathrm{ec} - \frac{1}{2} Q \jump{u}.
\end{equation}
For a positive semi-definite \textit{dissipation matrix} $Q$, this yields
\begin{equation}
  \jump{u} \cdot \fnum =
  \underbrace{\jump{u} \cdot \fnum_\mathrm{ec}}_{=\jump{\psi}}
    - \frac{1}{2} \underbrace{\jump{u} \cdot Q \jump{u}}_{\geq 0}
  \leq \jump{\psi}
\end{equation}
and thus an entropy stable flux.
For the numerical tests in section \ref{sec:numerical-tests}, the dissipation
matrix $Q$ was chosen in a local Lax-Friedrichs sense:
\begin{equation}
  Q = \lambda \, \I \quad \text{with} \quad \lambda = \max\set{ \abs{\lambda(-)},
  \abs{\lambda(+)} },
\end{equation}
where $\abs{\lambda(\pm)}$ is the greatest absolute value of all eigenvalues of
$A(u(\pm))$.

\subsection{Extension to a General Setting }
\label{sec:general}

In \eqref{eq:CPR_systemI}, we applied Gauss-Lobatto-Legendre nodes for reasons of simplicity.
In our theoretical investigation in the last section \ref{Sub:General}, we assumed
to have a diagonal-norm SBP basis including boundary nodes.
Moreover, we can also employ in our approach
Gauss-Legendre nodes or a modal Legendre basis as it was presented in
\cite{ranocha2017extended, ranocha2016summation}.
As distinguished from Gauss-Lobatto-Legendre, Gauss-Legendre nodes don't include
the boundary
and we need a further correction term for the restriction to the boundaries to guarantee
stability as demonstrated e.g. in \cite{ranocha2016summation}.
The extension to modal bases was done in \cite{ranocha2017extended}. However,
we focus only on modal Legendre basis, where an exact multiplication
of polynomials followed by an exact $\L^2$ projection is used for multiplication.
Using the $\mat{M}$-adjoint $\mat{u}[^*]=\mat{M}[^{-1}]\mat{u}[^T]\mat{M}$, the SBP CPR
method for Burgers' equation with a general basis (modal Legendre basis or
nodal Gauss-Legendre / Gauss-Lobatto-Legendre) reads
 \begin{equation}
\label{eq:Burgers-semidisc}
  \partial_t \vec{u}
  =
  - \frac{1}{3} \mat{D} \mat{u} \vec{u}
  - \frac{1}{3} \mat{u}[^*] \mat{D} \vec{u}
  + \mat{M}[^{-1}] \mat{R}[^T] \mat{B} \left(
      \vecfnum
      - \frac{1}{3} \mat{R} \mat{u} \vec{u}
      - \frac{1}{6} \left( \mat{R} \vec{u} \right) \bullet \left( \mat{R} \vec{u} \right)
    \right),
\end{equation}
see \cite{ranocha2017extended} for details\footnote{With  $ \bullet$, we denote the componentwise multiplication (Hadamard product) of two vectors.}.
Applying this to system \eqref{eq:Burgers_System2} and using our approach
from section \ref{Sub:General}, we get
\begin{thm}\label{Theorem1}
If the numerical flux $\fnum$ is entropy stable in the sense of Tadmor \eqref{eq:fnum-EC-condition},
the SBP CPR method
for the system \eqref{eq:Burgers_System2}, written componentwise as
\begin{equation}
\label{eq:semidiscsystem}
\begin{aligned}
  \partial_t \vec{u_k}
  =&
  - \frac{1}{3} \sum_{i,j=0}^M    \E[\phi_i\phi_j\phi_k]
    \left( \mat{D} \vec{u_i u_j} + \mat{u_j}[^*] \mat{D} \vec{u_i} \right)
  \\ &
  - \mat{M}[^{-1}] \mat{R}[^T] \mat{B}\left(
    \vec{f_k}^\mathrm{num}
    - \sum_{i,j=0}^M   \E[\phi_i\phi_j\phi_k] \left(
      \frac{1}{3} \mat{R} \vec{u_i u_j}
      + \frac{1}{6}\left(\mat{R} \vec{u_i}\right)  \bullet \left(\mat{R} \vec{u_j}\right)
      \right)
    \right),
\end{aligned}
\end{equation}
is conservative (across elements) and entropy stable in the discrete norm $||\cdot||_{\mat{M}\otimes \hat{\I}}$ induced by $\mat{M}$.

\end{thm}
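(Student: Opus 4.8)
The plan is to transport the continuous entropy estimate behind \eqref{Gleichungst} and \eqref{Gleichungstabilitaet1} to the componentwise scheme \eqref{eq:semidiscsystem}, exploiting that for the $\L^2$ entropy $U = \frac{1}{2} u^2$ the entropy variables coincide with the conserved variables, $v = u$. First I would differentiate the discrete element entropy $\frac{1}{2} \norm{\vec{u}}_{\mat{M} \otimes \hat{\I}}^2 = \frac{1}{2} \sum_{k=0}^M \vec{u_k}^T \mat{M} \vec{u_k}$, so that $\frac{\dif}{\dif t} \frac{1}{2} \norm{\vec{u}}_{\mat{M} \otimes \hat{\I}}^2 = \sum_{k=0}^M \vec{u_k}^T \mat{M}\, \partial_t \vec{u_k}$, and substitute \eqref{eq:semidiscsystem}. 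This splits the right-hand side into a volume contribution, coming from the split-form derivative terms, and a surface contribution, coming from the correction $-\mat{M}[^{-1}] \mat{R}[^T] \mat{B}(\cdots)$.

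For the volume contribution I would contract $-\frac{1}{3} \sum_{i,j} \est{\phi_i\phi_j\phi_k} (\mat{D} \vec{u_i u_j} + \mat{u_j}[^*] \mat{D} \vec{u_i})$ with $\vec{u_k}^T \mat{M}$ and sum over $k$. Using the $\mat{M}$-adjoint relation $\mat{M} \mat{u_j}[^*] = \mat{u_j}[^T] \mat{M}$, which turns $\vec{u_k}^T \mat{M} \mat{u_j}[^*] \mat{D} \vec{u_i}$ into $\vec{u_j u_k}^T \mat{M} \mat{D} \vec{u_i}$, together with the full symmetry of $\est{\phi_i\phi_j\phi_k}$ in its three indices to relabel the single factor into the tested slot, the two bilinear forms combine, via the SBP property \eqref{eq:SBP}, into the boundary form $\vec{u_k}^T \mat{R}[^T] \mat{B} \mat{R} \vec{u_i u_j}$. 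Evaluating these boundary quadratic forms and summing against $\est{\phi_i\phi_j\phi_k}$ reproduces $6\psi$ from \eqref{eq:flux_potential}, so the volume contribution equals $2\psi(-) - 2\psi(+)$; this is exactly the subcell flux-differencing identity \eqref{eq:fnum-EC-volume-terms} read inside the quadratic form.

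Next, contracting the correction term with $\vec{u_k}^T \mat{M}$ turns it into $\vec{u_k}^T \mat{R}[^T] \mat{B}$ applied to $\fnum$ minus the split-form boundary flux; on a nodal boundary the latter collapses to the physical flux $f$, since $\frac{1}{3} + \frac{1}{6} = \frac{1}{2}$ and $[f]_k = \frac{1}{2} \sum_{i,j} \est{\phi_i\phi_j\phi_k} u_i u_j$. Adding the volume and surface contributions and using that $\psi$ is homogeneous of degree three, so that $u^T f = u^T \partial_u \psi = 3\psi$ and hence the entropy flux $F = u^T f - \psi = 2\psi$, the per-element balance reduces to the boundary expression of \eqref{Gleichungst}. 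Summing two adjoining elements at their shared interface, where $\fnum$ is single-valued, these boundary terms combine into $\jump{u} \cdot \fnum - \jump{\psi}$ as in \eqref{Gleichungstabilitaet1}, which is $\le 0$ by the assumed Tadmor condition \eqref{eq:fnum-EC-condition}; summing over all interfaces under periodic boundary conditions then yields $\frac{\dif}{\dif t} \frac{1}{2} \norm{\vec{u}}_{\mat{M} \otimes \hat{\I}}^2 \le 0$, i.e. entropy stability. Conservation across elements follows from the same manipulation with $\vec{u_k}$ replaced by $\vec{1}$: using $\mat{D} \vec{1} = 0$ and the $i \leftrightarrow j$ symmetry, the split-form volume terms collapse to the consistent flux difference, which cancels the physical-flux part of the surface term and leaves a pure, single-valued numerical-flux difference that telescopes.

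Finally, I expect the genuinely delicate step to be the volume collapse in the general modal Legendre basis. There $\mat{u_j} \vec{u_k}$ is only the $\L^2$ projection of $u_j u_k$ and $\mat{R} \vec{u_i u_j} \neq (\mat{R} \vec{u_i}) \bullet (\mat{R} \vec{u_j})$, so the clean nodal identities used above hold only up to projection mismatches. The main obstacle is to verify that the specific split weights $\frac{1}{3}$ and $\frac{1}{6}$ in \eqref{eq:semidiscsystem} (equivalently $\beta = \frac{2}{3}$) are exactly those for which these mismatches are annihilated by the triple symmetry of $\est{\phi_i\phi_j\phi_k}$ together with the SBP property, so that \eqref{eq:fnum-EC-volume-terms} stays exact; this is what the discretely entropy-conservative construction of \cite{ranocha2017extended, fisher2013high} is designed to ensure.
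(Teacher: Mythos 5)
Your proposal follows essentially the same route as the paper's proof: test \eqref{eq:semidiscsystem} with $\vec{1}^{T}\mat{M}$ for conservation and with $\vec{u_k}^{T}\mat{M}$ (summed over $k$) for stability, use the SBP property \eqref{eq:SBP}, the $\mat{M}$-adjoint identity $\mat{M}\mat{u_j}[^*]=\mat{u_j}[^T]\mat{M}$ and the full symmetry of $\est{\phi_i\phi_j\phi_k}$ to annihilate the volume terms, and reduce the surviving interface terms to Tadmor's condition $\jump{u}\cdot\fnum\leq\jump{\psi}$. The one place you deviate is the intermediate regrouping of the boundary terms, and the ``obstacle'' you flag for the modal basis dissolves in the paper's grouping: one never identifies $\mat{R}\vec{u_i u_j}$ with $\left(\mat{R}\vec{u_i}\right)\bullet\left(\mat{R}\vec{u_j}\right)$ — the term $\frac{1}{3}\mat{R}\vec{u_i u_j}$ in the correction cancels exactly against the boundary contribution produced by applying SBP to $\mat{D}\vec{u_i u_j}$, while the surviving $\frac{1}{6}\left(\mat{R}\vec{u_i}\right)\bullet\left(\mat{R}\vec{u_j}\right)$ is precisely $\psi$ assembled from the boundary traces, which is what \eqref{eq:fnum-EC-condition} requires, so no separate verification that projection mismatches are ``annihilated'' is needed.
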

\begin{proof}
First, we demonstrate the conservation property. Therefore, we multiply equation
\eqref{eq:semidiscsystem} from the left by $\vec{1}^T \mat{M}$.
Using the SBP property \eqref{eq:SBP}, we obtain
\begin{equation}
\begin{aligned}
 \vec{1}^{T}\mat{M}\partial_t \vec{u_k}
  =&
  - \frac{1}{3} \sum_{i,j=0}^M   \E[\phi_i\phi_j\phi_k]
    \vec{1}^{T}\mat{M}\left( \mat{D} \vec{u_i u_j} + \mat{u_j}[^*] \mat{D} \vec{u_i} \right)
  \\&
  - \vec{1}^{T}\mat{M} \mat{M}[^{-1}] \mat{R}[^T] \mat{B} \left(
    \vec{f_k}^\mathrm{num}
    - \sum_{i,j=0}^M    \E[\phi_i\phi_j\phi_k] \left(
      \frac{1}{3} \mat{R} \vec{u_i u_j}
      + \frac{1}{6}\left(\mat{R} \vec{u_i}\right)  \bullet \left(\mat{R} \vec{u_j}\right)
      \right)
    \right)\\
    =&- \frac{1}{3} \sum_{i,j=0}^M   \E[\phi_i\phi_j\phi_k] \left(
    \vec{1}^T \mat{R}[^T] \mat{B} \mat{R} \vec{u_i u_j}- \vec{1}^{T}\mat{D}[^T] \mat{M} \vec{u_i u_j}+   \vec{1}^{T}\mat{M} \mat{M}[^{-1}]\mat{u_j}[^T]\mat{M} \mat{D} \vec{u_i} \right)\\
     & - \vec{1}^{T}\mat{R}[^T] \mat{B} \left(
       \vec{f_k}^\mathrm{num}
        - \sum_{i,j=0}^M    \E[\phi_i\phi_j\phi_k] \left(
       \frac{1}{3} \mat{R} \vec{u_i u_j}
       + \frac{1}{6}\left(\mat{R} \vec{u_i}\right) \bullet \left(\mat{R} \vec{u_j}\right)
       \right)
         \right).
\end{aligned}
\end{equation}
Applying $\mat{D}\vec{1}=0$ and the SBP property \eqref{eq:SBP}, we get
\begin{equation}
\begin{aligned}
 \vec{1}^{T}\mat{M}\partial_t \vec{u_k}
 =&
 - \frac{1}{3} \sum_{i,j=0}^M   \E[\phi_i\phi_j\phi_k] \left(
  \vec{1}^{T}\mat{u_j}[^T]\mat{M} \mat{D} \vec{u_i} \right)
 -
   \vec{1}^{T}\mat{R}[^T] \mat{B} \left(
       \vec{f_k}^\mathrm{num}
        - \frac{1}{6}\sum_{i,j=0}^M    \E[\phi_i\phi_j\phi_k]
        \left(\mat{R} \vec{u_i}\right)  \bullet \left(\mat{R} \vec{u_j}\right)
       \right)\\
    =&-  \sum_{i,j=0}^M    \E[\phi_i\phi_j\phi_k] \left(
 \frac{1}{6} \vec{u_j}^T \mat{M} \mat{D} \vec{u_i}- \frac{1}{6}\vec{u_j}^T \mat{D}[^T] \mat{M} \vec{u_i}+ \frac{1}{6} \vec{u_j}^T \mat{R}[^T] \mat{B} \mat{R} \vec{u_i}
 \right) \\
  &-
   \vec{1}^{T}\mat{R}[^T] \mat{B} \left(
      \vec{f_k}^\mathrm{num}
       - \frac{1}{6}\sum_{i,j=0}^M    \E[\phi_i\phi_j\phi_k]
        \left(\mat{R} \vec{u_i}\right) \bullet \left(\mat{R} \vec{u_j}\right)
       \right).
\end{aligned}
\end{equation}
Using the symmetry with respect to the indices $i,j$ yields
\begin{equation}
  \vec{1}^{T}\mat{M}\partial_t \vec{u_k}=-
   \vec{1}^{T}\mat{R}[^T] \mat{B}
      \vec{f_k}^\mathrm{num},
\end{equation}
and conservation across elements is shown, since the numerical flux is determined
uniquely at every boundary.

For stability,  we multiply equation \eqref{eq:semidiscsystem} by $\vec{u_k}^{T}\mat{M}$.
Using the SBP property \eqref{eq:SBP} yields
\begin{equation}
\begin{aligned}
  \vec{u_k}^{T}\mat{M}\partial_t \vec{u_k}
  =&
  - \frac{1}{3} \sum_{i,j=0}^M    \E[\phi_i\phi_j\phi_k]
    \vec{u_k}^{T}\mat{M}\left( \mat{D} \vec{u_i u_j} + \mat{u_j}[^*] \mat{D} \vec{u_i} \right)
  \\&
  - \vec{u_k}^{T}\mat{M} \mat{M}[^{-1}] \mat{R}[^T] \mat{B} \left(
    \vec{f_k}^\mathrm{num}
    - \sum_{i,j=0}^M   \E[\phi_i\phi_j\phi_k] \left(
      \frac{1}{3} \mat{R} \vec{u_i u_j}
      + \frac{1}{6}\left(\mat{R} \vec{u_i}\right)  \bullet \left(\mat{R} \vec{u_j}\right)
      \right)
    \right)\\
    =&- \frac{1}{3} \sum_{i,j=0}^M   \E[\phi_i\phi_j\phi_k] \left(
    \vec{u_k}^{T}\mat{M}\mat{D} \vec{u_i u_j} +   \vec{u_k}^{T}\mat{M} \mat{M}[^{-1}]\mat{u_j}[^T]\mat{M} \mat{D} \vec{u_i} \right)\\
     & - \vec{u_k}^{T}\mat{R}[^T] \mat{B} \left(
       \vec{f_k}^\mathrm{num}
        - \sum_{i,j=0}^M   \E[\phi_i\phi_j\phi_k] \left(
       \frac{1}{3} \mat{R} \vec{u_i u_j}
       + \frac{1}{6}\left(\mat{R} \vec{u_i}\right) \bullet \left(\mat{R} \vec{u_j}\right)
       \right)
         \right)\\
          =&- \frac{1}{3} \sum_{i,j=0}^M   \E[\phi_i\phi_j\phi_k] \left(
    \vec{u_k}^{T} \mat{R}[^T] \mat{B} \mat{R} \vec{u_i u_j}- \vec{u_k}^{T}\mat{D}[^T] \mat{M} \vec{u_i u_j} +   \vec{u_k}^{T} \mat{u_j}[^T]\mat{M} \mat{D} \vec{u_i} \right)\\
     & - \vec{u_k}^{T}\mat{R}[^T] \mat{B} \left(
       \vec{f_k}^\mathrm{num}
        - \sum_{i,j=0}^M   \E[\phi_i\phi_j\phi_k]\left(
       \frac{1}{3} \mat{R} \vec{u_i u_j}
       + \frac{1}{6}\left(\mat{R} \vec{u_i}\right)  \bullet\left(\mat{R} \vec{u_j}\right)
       \right)
         \right)\\
        =&- \frac{1}{3} \sum_{i,j=0}^M   \E[\phi_i\phi_j\phi_k] \left(- \vec{u_k}^{T}\mat{D}[^T] \mat{M} \vec{u_i u_j} +   \vec{u_k}^{T} \mat{u_j}[^T]\mat{M} \mat{D} \vec{u_i} \right)\\
       & - \vec{u_k}^{T}\mat{R}[^T] \mat{B} \left(
       \vec{f_k}^\mathrm{num}
        - \sum_{i,j=0}^M    \E[\phi_i\phi_j\phi_k]  \frac{1}{6}\left(\mat{R} \vec{u_i}\right) \bullet \left(\mat{R} \vec{u_j}\right)  \right).
\end{aligned}
\end{equation}
We sum over $k$ and get
\begin{equation}\label{eq:Stabilitycomp.}
 \begin{aligned}
  \frac{1 }{2}\frac{\dif}{\dif t}\norm{u}^2_{\mat{M}\otimes \hat{\I}}=&
- \frac{1}{3} \sum_{i,j,k=0}^M    \E[\phi_i\phi_j\phi_k] \left(- \vec{u_k}^{T}\mat{D}[^T] \mat{M} \vec{u_i u_j} +   \vec{u_k}^{T} \mat{u_j}[^T]\mat{M} \mat{D} \vec{u_i} \right)\\
       & -\sum_{k=0}^M \vec{u_k}^{T}\mat{R}[^T] \mat{B} \left(
       \vec{f_k}^\mathrm{num}
        - \sum_{i,j=0}^M   \E[\phi_i\phi_j\phi_k]  \frac{1}{6}\left(\mat{R} \vec{u_i}\right) \bullet \left(\mat{R} \vec{u_j}\right)  \right).
 \end{aligned}
\end{equation}
Using the symmetry with respect to the indices $i,j,k$, we permute
$\vec{u_k}^{T}\mat{D}[^T] \mat{M} \vec{u_i u_j}$ to
$ \vec{u_k}^{T} \mat{u_j}\mat{M} \mat{D} \vec{u_i}$ and
the first sum of \eqref{eq:Stabilitycomp.} is zero. Finally, we obtain
\begin{equation}\label{eq:Stabilitycomp2}
 \frac{1 }{2}\frac{\dif}{\dif t}\norm{u}^2_{\mat{M}\otimes \hat{\I}}=
 -\sum_{k=0}^M \vec{u_k}^{T}\mat{R}[^T] \mat{B}
       \vec{f_k}^\mathrm{num}
        +  \frac{1}{6} \sum_{i,j,k=0}^M   \E[\phi_i\phi_j\phi_k] \vec{u_k}^{T}\mat{R}[^T] \mat{B}  \left( \mat{R} \vec{u_i} \right) \bullet \left( \mat{R} \vec{u_j}\right)  .
\end{equation}
This is the rate of change of the energy in one element
 as already described in Example \ref{ex:FD} for Gauss-Lobatto-Legendre nodes.
 If we rewrite \eqref{eq:Stabilitycomp2} including the elemental index $e$, it is
 \begin{equation}\label{eq:Stabilitycomp3}
  \frac{1 }{2}\frac{\dif}{\dif t}\norm{u^{(e)}}^2_{\mat{M}\otimes \hat{\I}}=
  \sum_{k=0}^M \left(-u_{k,R}^{(e)} f^{\operatorname{num},(e)}_{k,R} +u_{k,L}^{(e)}
  f^{\operatorname{num},(e)}_{k,L}\right)
  +  \frac{1}{6} \sum_{i,j,k=0}^M
   \E[\phi_i\phi_j\phi_k] \left(u_{k,R}^{(e)}u_{i,R}^{(e)}u_{j,R}^{(e)}-
   u_{k,L}^{(e)}u_{i,L}^{(e)}u_{j,L}^{(e)}\right)
 \end{equation}
and the contribution of two elements is
\begin{equation}
\begin{aligned}
  \frac{1 }{2}\frac{\dif}{\dif t}\norm{u^{(e-1,e)}}^2_{\mat{M}\otimes \hat{\I}}=&
  \sum_{k=0}^M \left(-u_{k,R}^{(e-1)} f^{\operatorname{num},(e-1)}_{k,R} +u_{k,L}^{(e-1)}
  f^{\operatorname{num},(e-1)}_{k,L}  -u_{k,R}^{(e)} f^{\operatorname{num},(e)}_{k,R} +u_{k,L}^{(e)}
  f^{\operatorname{num},(e)}_{k,L}   \right)\\
  +&  \frac{1}{6} \sum_{i,j,k=0}^M
   \E[\phi_i\phi_j\phi_k] \left(u_{k,R}^{(e-1)}u_{i,R}^{(e-1)}u_{j,R}^{(e-1)}-
   u_{k,L}^{(e-1)}u_{i,L}^{(e-1)}u_{j,L}^{(e-1)}+u_{k,R}^{(e)}u_{i,R}^{(e)}u_{j,R}^{(e)}-
   u_{k,L}^{(e)}u_{i,L}^{(e)}u_{j,L}^{(e)}\right).
 \end{aligned}
 \end{equation}
 Focusing on the terms at the common boundary, we can reformulate these terms using
 \eqref{eq:prduct_psi} and obtain
 \begin{equation}
 \begin{aligned}
    &\sum_{k=0}^M \left(-u_{k,R}^{(e-1)} f^{\operatorname{num},(e-1)}_{k,R}
  +u_{k,L}^{(e)}
  f^{\operatorname{num},(e)}_{k,L} \right)+ \frac{1}{6} \sum_{i,j,k=0}^M
   \E[\phi_i\phi_j\phi_k]
  \left(u_{k,R}^{(e-1)}u_{i,R}^{(e-1)}u_{j,R}^{(e-1)}-
   u_{k,L}^{(e)}u_{i,L}^{(e)}u_{j,L}^{(e)}\right)\\
   \quad & = \jump{u} \cdot \fnum-\jump{\psi}\leq 0,
 \end{aligned}
 \end{equation}
 since the numerical flux is entropy stable in the sense of Tadmor.
 This means that inequality \eqref{eq:fnum-EC-condition} holds and by summing \eqref{eq:Stabilitycomp3} up
 over all elements, we get stability.
\end{proof}

\section{Reference Solutions}
\label{sec:reference-solutions}

In this section, analytical solutions to two test problems will be
determined.
Furthermore, their coefficients in the normalised Hermite basis of
the underlying PC method will be computed.
This is done in order to quantify the accuracy of the numerical method in
section \ref{sec:numerical-tests}.

The reference solution to the stochastic Riemann problem with an initial shock
in section \ref{subsec:reference-solutions_shock} was already investigated
in \cite{pettersson2009numerical,pettersson2015polynomial} and is thus just
briefly revised.
Yet, it should be stressed that we use a simplified recursion relation
\eqref{eq:recursion_relation} compared to the one utilised in
\cite{pettersson2009numerical,pettersson2015polynomial}.
This improvement not just renders the calculations more straight forward, but
further enables us to apply the approach to most general orthogonal
polynomials.

\subsection{Stochastic Riemann Problem with an Initial Rarefaction}
\label{subsec:reference-solutions_rarefaction}

Consider the stochastic Riemann problem with an initial rarefaction ($a > 0$)
of uncertain strength located at $x_0 \in [0,1]$, i.e.
\begin{equation}\label{rarefaction}
\begin{aligned}
  u(x,0,\xi(\omega)) = & \begin{cases}
		  u_L = -a + p( \xi(\omega)) & \text{ if } x < x_0, \\
		  u_R = a + p( \xi(\omega)) & \text{ if } x > x_0,
                 \end{cases} \\
  \xi \sim & \ \mathcal{N}(0,1).
\end{aligned}
\end{equation}
In this work, $p(\xi(\omega)) = b\xi(\omega)$ depends linearly on $\xi(\omega)$.\footnote{
The value of $\xi(\omega)$ lies in $\R$, we will use $y=\xi(\omega)$ for the notation.}
The distribution of $\xi$ is the standard Gaussian distribution $\mathcal{N}(0,1)$.
Further, a constant $a > 0$ will be assumed.
Boundary conditions are desired that make the $\L^2$ norm of $u$ over $D = [0,1]$
bounded and thus yield a well-posed problem \cite{pettersson2015polynomial}.
Since we do not concentrate on boundary conditions in this article, we apply
a common ad-hoc procedure as follows.
For the initial condition \eqref{rarefaction}, no boundary conditions are enforced
at all, which models an outflow behaviour. This is implemented numerically by not
adding a surface term at the corresponding boundaries. For a sufficiently small
time, the solution will not interact with the boundary and this treatment yields
acceptable results.

The test problem \eqref{rarefaction} also seems to be more appropriate to quantify
the accuracy of a high order method than the later one in section
\ref{subsec:reference-solutions_shock} and was not treated by Pettersson et al.
in \cite{pettersson2009numerical,pettersson2015polynomial}.

For time $t > 0$, the analytical (entropy) solution is given by
\begin{equation}\label{solution_rarefaction}
  u(x,t,y) = \begin{cases}
      u_L \ & \text{ if } x < x_0 + t u_L, \\
      \frac{x-x_0}{t} \ & \text{ if } x_0 + t u_L < x < x_0 + t u_R, \\
      u_R \ & \text{ if } x > x_0 + t u_R,
  \end{cases}
\end{equation}
where $u_L = -a + b y$ and $u_R = a + b  y$.

The coefficients of the complete PC expansion ($M \to \infty$) can be
calculated for any given $x$ and $t$ by
\begin{equation}
  u_i(x,t) = \int_{-\infty}^{\infty} u(x,t,y) \phi_i(y) \varrho(y) \dif y.
\end{equation}
Furthermore, the coefficients of the initial
condition at $t=0$ reduce to
\begin{equation}
  u_i(x,0) = \begin{cases}
      -a \delta_{i,0} + b \delta_{i,1} & \text{ if } x < x_0, \\
      a \delta_{i,0} + b \delta_{i,1} & \text{ if } x > x_0.
    \end{cases}
\end{equation}
In our implementation, we analogously set $u_0(x_0,0) = -a$ for $x_0$ in the
interior of an element.
If $x_0$ is a (Gauss-Lobatto-Legendre) point at some element boundary, we set
$u_0(x_-,0) = -a$ for the right boundary point of the element to the left
$x_-=x_0$ and $u_0(x_+,0) = a$ for the left boundary point of the element to
the right $x_+=x_0$.

For $t > 0$, the analytical solution \eqref{solution_rarefaction} can also be
written as
\begin{equation}\label{solution_rarefaction_new}
 u(x,t, y) = \begin{cases}
      -a + b y \ & \text{ if }  y> y_2, \\
      \frac{x-x_0}{t} \ & \text{ if }  y_1 < y < y_2, \\
      a + b y \ & \text{ if }  y < y_1,
  \end{cases}
\end{equation}
where $ y_1:= \frac{x-x_0-at}{bt}$ and $ y_2:= \frac{x-x_0+at}{bt}$,
and the coefficients of the complete PC expansion are given by
\begin{equation}
\label{eq:coefficients-rarafection}
\begin{aligned}
  u_i(x,t)
  = a\delta_{i,0} + b\delta_{i,1}
    + by_1 \int_{y_1}^{y_2} \phi_i(y) \varrho(y) \dif y
   - 2a \int_{y_2}^{\infty} \phi_i(y) \varrho(y) \dif y
    - b \int_{y_1}^{y_2} y \phi_i(y) \varrho(y) \dif y
\end{aligned}
\end{equation}
for any given $x,t$ and $i\geq0$.
By the recursion relation
\begin{equation}\label{eq:recursion_relation}
  \phi_{i}(y) \rho(y) = - \frac{1}{\sqrt{i}} \frac{\dif}{\dif y} \left(
\phi_{i-1}(y) \rho(y) \right), \quad i\geq1
\end{equation}
for normalised Hermite polynomials and integration by parts, the integrals in
\eqref{eq:coefficients-rarafection} reduce to
\begin{equation}
\begin{aligned}
 \int_{y_1}^{y_2} \phi_i(y) \varrho(y) \dif y
    & \overset{i>0}{=} - \frac{1}{\sqrt{2\pi i}} \left[
      \phi_{i-1}(y_2)\exp\left(\frac{-y_2^2}{2}\right) -
      \phi_{i-1}(y_1) \exp\left(\frac{-y_1^2}{2}\right) \right], \\
  \int_{y_2}^{\infty} \phi_i(y) \varrho(y) \dif y
    & \overset{i>0}{=} \frac{1}{\sqrt{2\pi i}} \phi_{i-1}(y_2)
      \exp\left(\frac{-y_2^2}{2}\right), \\
  \int_{y_1}^{y_2} y \phi_i(y) \varrho(y) \dif y
    & \overset{i>0}{=} - \frac{1}{\sqrt{i}} \left[
      y \phi_{i-1}(y)\varrho(y)\Bigg|_{y_1}^{y_2} +
      \int_{y_1}^{y_2} \phi_{i-1}(y) \varrho(y) \dif y
      \right] \\
    & \overset{i>1}{=}  - \frac{1}{\sqrt{i}} \left[
      y \phi_{i-1}(y)\varrho(y)\Bigg|_{y_1}^{y_2} +
      \frac{1}{\sqrt{i-1}} \phi_{i-2}(y) \varrho(y)\Bigg|_{y_1}^{y_2}
      \right],
\end{aligned}
\end{equation}
for $i\geq2$.
So finally, the coefficients of the complete PC expansion for $i\geq2$ are
given by
\begin{equation}
\label{eq:coefficients-rarafection2}
\begin{aligned}
  u_i(x,t) = \frac{1}{\sqrt{2\pi i}} \Bigg[
  & b(y_1+1) \phi_{i-1}(y_1)\exp\left(\frac{-y_1^2}{2}\right) +
  ( b(y_2 - y_1) - 2a) \phi_{i-1}(y_2)\exp\left(\frac{-y_2^2}{2}\right) \\
  & -
  \frac{b}{\sqrt{i-1}} \phi_{i-2}(y_1)\exp\left(\frac{-y_1^2}{2}\right) +
  \frac{b}{\sqrt{i-1}} \phi_{i-2}(y_2)\exp\left(\frac{-y_2^2}{2}\right)
  \Bigg],
\end{aligned}
\end{equation}
when the normalised Hermite polynomials are applied.

\subsection{Stochastic Riemann Problem with an Initial Shock}
\label{subsec:reference-solutions_shock}

Consider the stochastic Riemann problem with an initial shock of uncertain
strength located at $x_0 \in [0,1]$, i.e.
\begin{equation}\label{shock}
\begin{aligned}
  u(x,0, \xi(\omega)) = & \begin{cases}
		  u_L = a + p( \xi(\omega)) & \text{ if } x < x_0, \\
		  u_R = -a + p(\xi(\omega)) & \text{ if } x > x_0,
                 \end{cases} \\
  u(0,t, \xi(\omega)) = & \ u_L, \ u(1,t,\xi(\omega)) = u_R, \\
  \xi \sim & \ \mathcal{N}(0,1).
\end{aligned}
\end{equation}
$\xi$, $p(\xi)$, $y = \xi(\omega)$, and $a$ are as in the previous section.
Choosing this particular Riemann problem allows a head-to-head
comparison with the numerical results obtained by Pettersson et al.
\cite{pettersson2009numerical,pettersson2015polynomial}.

By the Rankine-Hugoniot condition for a fixed $y$ the shock speed is $s = by$
and the shock location $x_s$ a given by
\begin{equation}
  x_s = x_0 + tby.
\end{equation}
Thus, for time $t \geq 0$, the analytical solution is
\begin{equation}\label{solution_shock}
  u(x,t,y) = \begin{cases}
      u_L \ & \text{ if } x < x_0 + tby, \\
      u_R \ & \text{ if } x > x_0 + tby.
  \end{cases}
\end{equation}

The coefficients of the complete PC expansion ($M \to \infty$) can now be
calculated for any given $x$ and $t$ by
\begin{equation}
  u_i(x,t) = \int_{-\infty}^{\infty} u(x,t,y) \phi_i(y) \rho(y) \dif y.
\end{equation}
For the coefficients of the initial condition at $t=0$, this simply reduces to
\begin{equation}
  u_i(x,0) = \begin{cases}
      a \delta_{i,0} + b \delta_{i,1} & \text{ if } x < x_0, \\
      -a \delta_{i,0} + b \delta_{i,1} & \text{ if } x > x_0,
    \end{cases}
\end{equation}
since the first two normalised stochastic Hermite polynomials are given by
$\phi_0(y) = 1$ and $\phi_1(y) = y$.
In our implementation, we set $u_0(x_0,0) = a$ for $x_0$ in the interior of an
element.
If $x_0$ was a (Gauß-Lobatto-Legendre) point at some element boundary, we set
$u_0(x_-,0)
= a$ for the right boundary point of the element to the left $x_-=x_0$ and
$u_0(x_+,0) = -a$ for the left boundary point of the element to the right
$x_+=x_0$.

For $t > 0$, the jump location in $y_s$ is given by
\begin{equation}
  y_s = \frac{x - x_0}{bt}
\end{equation}
with respect to $x$ and $t$, where $b$ is assumed to be positive.
Therefore, the coefficients of the complete PC expansion are given by
\begin{equation}\label{coefficients}
  u_i(x,t) = a\delta_{i,0} + b\delta_{i,1} - 2a \int_{-\infty}^{y_s}
\phi_i(y) \rho(y) \dif y
\end{equation}
for any given $x,t$ and $i\geq0$.
By the recursion relation \eqref{eq:recursion_relation}
for normalised Hermite polynomials and integration by parts, the integral in
(\ref{coefficients}) reduces to
\begin{equation}
\begin{aligned}
  \int_{-\infty}^{y_s} \phi_i(y) \rho(y) \dif y
   = - \frac{1}{\sqrt{i}} \phi_{i-1}(y) \rho(y) \Big|_{-\infty}^{y_s}
   = - \frac{1}{\sqrt{2\pi i}}
\phi_{i-1}(y_s) \exp\left(\frac{-y_s^2}{2}\right)
\end{aligned}
\end{equation}
for $i\geq1$.
So, the coefficients of the complete PC expansion for $i\geq1$ are
quite handily given by
\begin{equation}
\label{coefficients2-shock}
\begin{aligned}
  u_i(x,t) & = b\delta_{i,1} + a \frac{\sqrt{2}}{\sqrt{\pi i}}
\phi_{i-1}(y_s) \exp\left( \frac{-y_s^2}{2} \right)
\end{aligned}
\end{equation}
when the normalised Hermite polynomials are applied.
In \cite{pettersson2009numerical,pettersson2015polynomial}, Pettersson et al.
already argued the coefficients $u_i$ to be continuous in $x$ and $t$ for $x
\in [0,1]$ and $t > 0$.

Similar to the test problem discussed before, we do not focus on boundary conditions.
Using a common approach, Dirichlet boundary conditions are enforced as usual in
FV/DG methods using numerical fluxes. For sufficiently small times, this will suffice
since the solution does not interact too much with the boundary.

\subsection{Extension to a general PC method}
\label{subsec:Extension_to_a_general_PC_method}

Till now we have just considered normalised Hermite polynomials for the chaos expansion.
We can however generalise our PC method to further classical orthogonal polynomials.
We present the calculations of our test cases in the general setting of
classical orthogonal polynomials, before we focus on the Jacobi and Laguerre polynomials
as additional examples.

For a general PC method using some other basis of orthogonal polynomials
$\set{ \phi_i^{(\rho)} }$ with corresponding
weight function $\rho$, the coefficients of the complete gPC expansion are
still given by
\begin{equation}
  u_i(x,t) = a \scp{ 1 }{ \phi_i^{(\rho)} } + b \scp{ 1 }{ \phi_i^{(\rho)} }
  - 2a \int_{-\infty}^{y_s} \phi_i^{(\rho)}(y) \rho(y) \dif y
\end{equation}
for the stochastic Riemann problem with an initial shock and by
\begin{equation}
  u_i(x,t) =  a \scp{ 1 }{ \phi_i } + b \scp{ y }{ \phi_i }
  -2a \int_{-\infty}^{y_1} \phi_i^{(\rho)}(y) \rho(y) \dif y +
b y_1
\int_{y_1}^{y_2} \phi_i^{(\rho)}(y) \rho(y) \dif y
  -b \int_{y_1}^{y_2} y \phi_i^{(\rho)}(y) \rho(y) \dif y
\end{equation}
for the stochastic Riemann problem with an initial rarefaction.
So in general we wish to compute the inner products $\scp{ 1 }{ \phi_i^{(\rho)} },
\scp{ y }{ \phi_i^{(\rho)} }$ and integrals of the form
\begin{equation}
  \int_{a}^{b} \phi_i^{(\rho)}(y) \rho(y) \dif y
  \quad \text{ and } \quad
  \int_{a}^{b} y \phi_i^{(\rho)}(y) \rho(y) \dif y.
\end{equation}

For a general basis of orthogonal polynomials a similar recursion relation
to the one for the Hermite polynomials is obtained by \emph{Rodrigues' formula}
\begin{equation}
  \phi_i^{(\rho)}(y) = \frac{1}{e_i \rho(y)} \frac{\dif^{\ i}}{\dif
y^{i}}
  \left( \rho(y) [ Q(y) ]^{i} \right),
\end{equation}
which can be found in \cite[section 22.1]{abramowitz1972handbook}.
The numbers $e_i$ depend on the standardisation and $Q$ is a given quadratic
(at most) polynomial coming from the underlying differential equation.
For such a general basis, the recursion relation
\begin{equation}
\begin{aligned}
  \frac{\dif}{\dif y} \left( \phi_{i}^{(\rho)}(y) \rho(y) \right)
  & = \frac{1}{e_i} \frac{\dif^{\ i+1}}{\dif y^{i+1}} \left( \rho(y)
\left[ Q(y) \right]^{i} \right)
  = \frac{e_{i+1}}{e_i} \tilde{\omega}(y) \frac{1}{e_{i+1}}
\frac{\dif^{\ i+1}}{\dif y^{i+1}} \left( \tilde{\omega}(y)
\left[ Q(y) \right]^{i+1} \right)
  \\
  &= \frac{e_{i+1}}{e_i} \phi_{i+1}^{(\tilde{\omega})}(y)
\tilde{\omega}(y)
\end{aligned}
\end{equation}
holds, where $\tilde{\omega}(y) = \frac{\rho(y)}{Q(y)}$.
So in terms of the recursion relations before, one has
\begin{equation}
  \phi_{i}^{(\rho)}(y) \rho(y) = \frac{e_{i-1}}{e_{i}}
\frac{\dif}{\dif
y} \left( \phi_{i-1}^{(\omega Q)}(y) \rho(y) Q(y) \right)
\end{equation}
for $i\geq1$.
The integrals then can again be calculated by integration by parts.
One has
\begin{equation}\label{integral1}
  \int_{a}^{b} \phi_i^{(\rho)}(y) \rho(y) \dif y
  = \frac{e_{i-1}}{e_i} \phi_{i-1}^{(\omega Q)}(y) \rho(y) Q(y)
\bigg|_a^b
\end{equation}
for $i\geq1$, and
\begin{equation}\label{integral2}
  \int_{a}^{b} y \phi_i^{(\rho)}(y) \rho(y) \dif y
  = \frac{1}{e_i} \left[ e_{i-1} \phi_{i-1}^{(\omega Q)}(y) - e_{i-2}
\phi_{i-2}^{(\omega Q^2)}(y) Q(y) \right] \rho(y) Q(y) \bigg|_a^b
\end{equation}
for $i\geq2$.
Note that for the Hermite polynomials (not normalised) one has
\begin{equation}
  e_i = (-1)^i, \ \rho(y) = \frac{1}{\sqrt{2 \pi}} e^{\frac{-y^2}{2}}, \
Q(y) = 1, \ \rho(y)Q(y) = \rho(y).
\end{equation}

\subsection{Jacobi polynomials}

For the Jacobi polynomials $P_n^{(\alpha,\beta)},\; \alpha, \beta\in
(-1,\infty)$, the weight function and other parameters are given by
\begin{equation}
\begin{aligned}
  e_i = (-2)^i i!, & \quad \rho^{(\alpha,\beta)}(y) =
\mathbf{1}_{[0,1]}(y) (1-y)^{\alpha} (1+y)^{\beta}, \\
  Q(y) = 1-y^2, & \quad \rho^{(\alpha,\beta)}(y)Q(y) =
\rho^{(\alpha +1,\beta +1)}(y).
\end{aligned}
\end{equation}
Their orthogonality property then reads
\begin{equation}
\begin{aligned}
  \scp{ P_n^{(\alpha,\beta)} }{ P_i^{(\alpha,\beta)} }
  & = \int_{-1}^{1} P_n^{(\alpha,\beta)}(y) P_i^{(\alpha,\beta)}(y)
(1-y)^\alpha (1+y)^\beta \dif y \\
  & = \frac{ 2^{\alpha+\beta+1} }{2n+\alpha+\beta+1}
\frac{\Gamma(n+\alpha+1)\Gamma(n+\beta+1)}{\Gamma(n+\alpha+\beta+1) n!}
\delta_{i,n}.
\end{aligned}
\end{equation}
Since the first two Jacobi polynomials are given by
\begin{equation}
  P_0^{(\alpha,\beta)}(y) = 1 \quad \text{and} \quad
  P_1^{(\alpha,\beta)}(y) = \frac{1}{2}(\alpha-\beta) +
\frac{1}{2}(\alpha+\beta+2)y,
\end{equation}
the inner products $\scp{ 1 }{ \phi_i }, \scp{ y }{ \phi_i }$ read
\begin{equation}
\begin{aligned}
  \scp{ 1 }{ P_i^{(\alpha,\beta)} } & = \frac{ 2^{\alpha+\beta+1}
}{\alpha+\beta+1}\frac{\Gamma(\alpha+1)\Gamma(\beta+1)}{
\Gamma(\alpha+\beta+1)} \delta_{i,0}, \\
  \scp{ y }{ P_i^{(\alpha,\beta)} } & =
  -\frac{2}{\alpha+\beta+2} \scp{ P_1^{(\alpha,\beta)} }{ P_i^{(\alpha,\beta)} }
  + \frac{\alpha-\beta}{\alpha+\beta+2}
    \scp{ P_0^{(\alpha,\beta)} }{ P_i^{(\alpha,\beta)} }
  \\
  & = -\frac{2^{\alpha+\beta+2}}{(\alpha+\beta+2)(\alpha+\beta+3)}
      \frac{\Gamma(\alpha+2)\Gamma(\beta+2)}{\Gamma(\alpha+\beta+2)}
\delta_{i,1} \\
  & \quad +
\frac{(\alpha-\beta)2^{\alpha+\beta+1}}{(\alpha+\beta+2)(\alpha+\beta+1)}
\frac{\Gamma(\alpha+1)\Gamma(\beta+1)}{\Gamma(\alpha+\beta+1)} \delta_{i,0}.
\end{aligned}
\end{equation}

By (\ref{integral1}) and (\ref{integral2}), the coefficients of the complete
gPC expansion for the stochastic Riemann
problem with an initial shock and $i\geq1$ are given by
\begin{equation}
\begin{aligned}
  u_i(x,t) =
   b \frac{2^{\alpha+\beta+1}
\Gamma(\alpha+2)\Gamma(\beta+2)}{
(\alpha+\beta+2)(\alpha+\beta+3)\Gamma(\alpha+\beta+2)} \delta_{i,1}
+ \frac{a}{i} P_{i-1}^{(\alpha+1,\beta+1)}(y_s)
\rho^{(\alpha+1,\beta+1)}(y_s).
\end{aligned}
\end{equation}
The coefficients of the complete gPC expansion for the stochastic Riemann
problem with an initial rarefaction and $i\geq2$ are given by
\begin{equation}
\begin{aligned}
  u_i(x,t)
  =&
  \frac{1}{2i} \left[ 2a + b (y_1 - 1) \right]
    P_{i-1}^{(\alpha+1,\beta+1)}(y_1) \rho^{(\alpha+1,\beta+1)}(y_1)
  + \frac{1}{2i} b (1 - y_1)
    P_{i-1}^{(\alpha+1,\beta+1)}(y_2) \rho^{(\alpha+1,\beta+1)}(y_2)
    \\&
  + \frac{b}{4i(i-1)}
    P_{i-2}^{(\alpha+2,\beta+2)}(y_1) \rho^{(\alpha+2,\beta+2)}(y_1)
  - \frac{b}{4i(i-1)}
    P_{i-2}^{(\alpha+2,\beta+2)}(y_2) \rho^{(\alpha+2,\beta+2)}(y_2).
\end{aligned}
\end{equation}

\subsection{Laguerre polynomials}
\label{Appendix:Reference}

For the Laguerre polynomials $L_n^{(\alpha)},\; \alpha \in
(-1,\infty)$, the weight function and other parameters are given by
\begin{equation}
  e_i = i!, \ \rho^{(\alpha)}(y) = \mathbf{1}_{[0,\infty)}(y)
y^{\alpha} e^{-y}, \ Q(y) = y, \ \rho^{(\alpha)}(y)Q(y) =
\rho^{(\alpha +1)}(y).
\end{equation}
Their orthogonality property then reads
\begin{equation}
  \scp{ L_n^{(\alpha)} }{ L_i^{(\alpha)} }
  = \int_{0}^{\infty } L_{n}^{(\alpha)}(y) L_{i}^{(\alpha
)}(y) y^{\alpha}e^{-y} \dif y
  = \frac{\Gamma(n+\alpha+1)}{n!} \delta_{i,n}.
\end{equation}
Since the first two Laguerre polynomials are given by
\begin{equation}
  L_0^{(\alpha)}(y) = 1 \quad \text{and} \quad
  L_1^{(\alpha)}(y) = -y + 1,
\end{equation}
the inner products $\scp{ 1 }{ \phi_i }, \scp{ y }{ \phi_i }$ read
\begin{equation}
\begin{aligned}
  \scp{ 1 }{ L_i^{(\alpha)} } & = \Gamma(\alpha+1) \delta_{i,0},
  \\
  \scp{ y }{ L_i^{(\alpha)} } & = -\Gamma(\alpha+2)\delta_{i,1} +
\Gamma(\alpha+1)\delta_{i,0}.
\end{aligned}
\end{equation}

By (\ref{integral1}) and (\ref{integral2}), the coefficients of the complete
gPC expansion for the stochastic Riemann
problem with an initial shock and $i\geq1$ are given by
\begin{equation}
  u_i(x,t) = - b \Gamma(\alpha+2)
\delta_{i,1} + \frac{2a}{i}
L_{i-1}^{(\alpha+1)}(y_s) \rho^{(\alpha)}(y_s).
\end{equation}
The coefficients of the complete gPC expansion for the stochastic Riemann
problem with an initial rarefaction and $i\geq2$ are given by
\begin{equation}
\begin{aligned}
  u_i(x,t) = & \frac{1}{i} \left[ b(1-y_1) - 2a \right]
L_{i-1}^{(\alpha+1)}(y_1) \rho^{(\alpha+1)}(y_1)
  + \frac{1}{i} \frac{1}{i} b ( y_1 -1 )
L_{i-1}^{(\alpha+1)}(y_2) \rho^{(\alpha+1)}(y_2) \\
  & - \frac{b}{i(i-1)}
L_{i-2}^{(\alpha+2)}(y_1) \rho^{(\alpha+2)}(y_1)
  + \frac{b}{i(i-1)}
L_{i-2}^{(\alpha+2)}(y_2) \rho^{(\alpha+2)}(y_2).
\end{aligned}
\end{equation}

\section{Numerical tests}
\label{sec:numerical-tests}

In order to quantify the behaviour of the numerical methods, two
different test cases are considered in this section:
Burgers' equation with an initial
rarefaction and an initial shock, both with an uncertain perturbation.
The test case of an initial shock was also investigated by Pettersson et al.
\cite{pettersson2009numerical,pettersson2015polynomial} in the context of SBP FD
methods and thus allows a comparison of the numerical results.
The first test case of an initial rarefaction will demonstrate the capability
of the SBP CPR method to capture expansion waves accurately.

For the truncation of the polynomial chaos expansion,
special focus will be given to the truncation for $M=3$,
i.e. the four dimensional system \eqref{BurgerGalerkinUQ2} with matrix $A(u)$
given by
 \begin{equation}\label{eq:M=3matrix}
    A(u)
    =
    \begin{pmatrix}
      u_{0} & u_{1} & u_{2} & u_{3}
      \\
      u_{1} & u_{0} + \sqrt{2} u_{2} & \sqrt{2} u_{1} + \sqrt{3} u_{3} & \sqrt{3} u_{2}
      \\
      u_{2} & \sqrt{2} u_{1} + \sqrt{3} u_{3} & u_{0}
      + 2 \sqrt{2} u_{2} & \sqrt{3} u_{1} + 3 \sqrt{2} u_{3}
      \\
      u_{3} & \sqrt{3} u_{2} & \sqrt{3} u_{1} + 3 \sqrt{2} u_{3} & u_{0} + 3 \sqrt{2} u_{2}
    \end{pmatrix}.
  \end{equation}
Note that the reference solutions $\uref$ in the last section were
derived as solutions of the infinite order systems for $M \to \infty$.
They are smooth functions.
However, the numerical solutions are calculated by solving numerically the truncated systems
which are hyperbolic.  The numerical solutions may
contain discontinuities and, therefore, differ from the ones of
the infinite order system. See also the following Remark \ref{discontinuities_problems}
 in this context.

While the discretisation in space is done by different methods like SBP CPR and FV,
the discretisation in time is always done by the strong-stability
preserving third order explicit Runge-Kutta method using three stages,
\emph{SSPRK(3,3)}, given by Gottlieb and Shu \cite{gottlieb1998total}.
At least for linear problems
\begin{equation}
  \dot{u} = Lu,
\end{equation}
this method was shown to be strongly stable for semibounded operators $L$ under
certain time step restriction, \cite{tadmor2002semidiscrete,ranocha2018l2}.
Note that stable discretisations in space correspond to such
semibounded operators.

If nothing else is said, we chose the time step to be
\begin{equation}
  \Delta t =
    C \cdot \frac{1}{N\cdot(2p+1)^2\cdot M}
    \quad \text{with} \quad
    C = \frac{1}{2}
\end{equation}
in the numerical tests.

For comparison, we apply also the SBP FD method of Pettersson et al..
They offered a Matlab code very well prepared in \cite{pettersson2015polynomial}.
In this code, the classical RK4 method is used for time integration.

\begin{re}\label{discontinuities_problems}
As described in \cite{poette2009uncertainty}, discontinuities in
the $x$-space (here: initial conditions) lead to $M$-convergence problems.
We will also note this behaviour in our numerical tests. A detailed analysis
of convergence with respect to $M$, $p$ or $N$ for smooth solutions can
be found in \cite{giesselmann2017posteriori, 2018arXiv180510177M} but is beyond
the scope of this paper.
However, for the first test case (initial rarefaction),
we will provide a convergence study for the truncated system where the reference
solution is obtained by a high-resolution numerical solution and also an analysis
using the reference solution which was calculated in section \ref{sec:reference-solutions}.
Though, the test with the initial shock has several more issues and we will consider
these in detail.
\end{re}

\subsection{Initial Rarefaction}

As a first example, the stochastic Riemann problem (\ref{rarefaction}) with
an initial rarefaction will be covered.
The initial condition is
\begin{equation}
\begin{aligned}
  u(x,0, \xi(\omega)) = & \begin{cases}
		  u_L = -a + p( \xi(\omega)) & \text{ if } x < x_0, \\
		  u_R = a + p( \xi(\omega)) & \text{ if } x > x_0,
                 \end{cases} \\
  \xi \sim & \ \mathcal{N}(0,1),
\end{aligned}
\end{equation}
with uncertain height located at $x_0 \in [0,1]$ and
$p( \xi(\omega)) = b \xi(\omega)$ depends linearly on $\xi(\omega)$.
Here, the parameters $a=1$, $b=0.2$ and $x_0 = 0.5$ are chosen.
As described in section~\ref{subsec:reference-solutions_rarefaction}, outflow boundary
conditions are used.
Note that the expectation $\E[u]$ and variance $\Var(u)$ of the initial
condition were derived in the previous section
\ref{subsec:reference-solutions_rarefaction} as well.
There, also the analytical solution $\uref$ for the infinite order system,
i.e. $M \to \infty$, is given by \eqref{eq:coefficients-rarafection} and
\eqref{eq:coefficients-rarafection2}.

Note that all numerical solutions are obtained for a fixed $M<\infty$ and thus provide
approximations to a truncated system.
The error between a numerical solution for the truncated polynomial chaos expansion and the
reference solution is investigated by measuring the (discrete)
$\L^2$ norm error of the expected value and the variance.
On the $n$-th element $D_n$, we have
\begin{align}
  \norm{\varepsilon_{\E}}_{n}^2
    = \int_{D_n} \left| \E[\unum] - \E[\uref] \right|^2 \dif x, \qquad
  \norm{\varepsilon_{\Var}}_{n}^2
    = \int_{D_n} \left| \Var(\unum) - \Var(\uref) \right|^2 \dif x.
\end{align}
To get the global errors, we sum up the local errors, i.e.
\begin{align}
  \norm{\varepsilon_{\E}}^2
    = \sum_{n=1}^N \norm{\varepsilon_{\E}}_{n}^2, \qquad
  \norm{\varepsilon_{\Var}}^2
    = \sum_{n=1}^N \norm{\varepsilon_{\Var}}_{n}^2.
\end{align}
Tables \ref{tab:Cauchy_N-ref_rarefaction} and \ref{tab:Cauchy_p-ref_rarefaction} demonstrate
convergence of the numerical solution to a reference solution of the truncated system for $M=3$ when
the spacial discretisation is refined, either in the number of elements $N$ or the polynomial degree
$p$.
Since no analytical solution is known for the truncated system, the reference solution is obtained
by a high-resolution numerical solution, i.e. $N=12800$ in Table \ref{tab:Cauchy_N-ref_rarefaction}
and $p=5$ in Table \ref{tab:Cauchy_p-ref_rarefaction}.

\begin{table}[!htb]
    \centering
    \begin{tabular}{ r | c | c | c | c }
	& \multicolumn{2}{c|}{Expected Value $\E[u]$} & \multicolumn{2}{c}{Variance $\Var(u)$} \\
        $N$ & $\norm{\E[u_{N}]-\E[u_{N/2}]}$ & $\norm{\E[u_{12800}]-\E[u_{N}]}$
	    & $\norm{\Var(u_{N})-\Var(u_{N/2})}$ & $\norm{\Var(u_{12800})-\Var(u_{N})}$ \\
        \hline \hline

          100 &  	& 2.6e-02 &    	    & 3.6e-03 \\
          200 & 1.1e-02 & 1.6e-02 & 1.3e-03 & 2.4e-03 \\
          400 & 7.0e-03 & 9.4e-03 & 9.5e-04 & 1.5e-03 \\
          800 & 4.2e-03 & 5.3e-03 & 6.5e-04 & 9.2e-04 \\
         1600 & 2.5e-03 & 2.6e-03 & 4.2e-04 & 5.1e-04 \\
         3200 & 1.5e-03 & 1.3e-03 & 2.6e-04 & 2.5e-04 \\
         6400 & 8.8e-04 & 5.0e-04 & 1.5e-04 & 9.4e-05 \\
        12800 & 5.0e-04 & 0   	  & 9.4e-05 & 0 \\
	\hline

    \end{tabular}
    \caption{
        Initial Rarefaction.
        Cauchy differences and errors for the expected value $\E[u]$ and variance 
$\Var(u)$ of the
numerical solutions for $M=3$, $p=0$, and increasing $N$.
    }
    \label{tab:Cauchy_N-ref_rarefaction}
\end{table}

\begin{table}[!htb]
    \centering
    \begin{tabular}{ r | c | c | c | c }
	& \multicolumn{2}{c|}{Expected Value $\E[u]$} & \multicolumn{2}{c}{Variance 
$\Var(u)$} \\
        $p$ & $\norm{\E[u_{p}]-\E[u_{p-1}]}$ & $\norm{\E[u_{5}]-\E[u_{p}]}$
	    & $\norm{\Var(u_{p})-\Var(u_{p-1})}$ & $\norm{\Var(u_{5})-\Var(u_{p})}$ \\
        \hline \hline

	  0 & 	      & 2.3e-02 & 	  & 6.2e-03 \\
          1 & 2.2e-02 & 2.2e-03 & 3.5e-03 & 3.9e-04 \\
          2 & 1.6e-03 & 8.6e-04 & 1.9e-04 & 1.7e-04 \\
          3 & 6.2e-04 & 3.8e-04 & 7.8e-05 & 9.0e-05 \\
          4 & 3.1e-04 & 1.9e-04 & 4.5e-05 & 5.9e-05 \\
          5 & 1.9e-04 & 0 	& 3.4e-05 & 0	    \\
	\hline

    \end{tabular}
    \caption{
        Initial Rarefaction.
        Cauchy differences and errors for the expected value $\E[u]$ and variance 
$\Var(u)$ of the
numerical solutions for $M=3$ $N=200$, and increasing $p$.
    }
    \label{tab:Cauchy_p-ref_rarefaction}
\end{table}

Tables \ref{tab:errors_N-ref_E_rarefaction} and Table \ref{tab:errors_N-ref_Var_rarefaction} list
the errors between the numerical solution for the truncated polynomial chaos expansion and the
analytical solution for polynomial degree $p=0$ and an increasing number of elements $N$ as well as
an increasing order $M$ in the polynomial chaos expansion.
Table \ref{tab:errors_N-ref_E_rarefaction} shows the errors for the expected value.
Table \ref{tab:errors_N-ref_Var_rarefaction} shows the errors for the variance.

\begin{table}[!htb]
    \centering
    \begin{tabular}{ c | c | c | c | c | c | c | c | c }
            & \multicolumn{8}{c}{$\norm{\varepsilon_{\E}}$} \\
        $N$ & $M=1$ & $M=2$ & $M=3$ & $M=4$ & $M=5$ & $M=6$ & $M=7$ & $M=8$ \\
        \hline \hline

          100 & 1.2e-05 & 3.7e-05 & 5.1e-05 & 5.9e-05 & 6.7e-05 & 7.5e-05 & 8.2e-05 & 8.8e-05 \\
          200 & 2.3e-08 & 5.7e-07 & 2.1e-06 & 3.3e-06 & 3.7e-06 & 4.1e-06 & 4.6e-06 & 5.0e-06 \\
          400 & \textbf{6.1e-10} & \textbf{3.2e-10} & 1.3e-08 & 9.7e-08 & 2.1e-07 & 2.5e-07 &
2.5e-07 & 2.8e-07 \\
          800 & 4.0e-10 & 4.0e-10 & \textbf{4.0e-10} & \textbf{1.7e-10} & 4.0e-09 & 1.7e-08 &
2.6e-08 & 2.3e-08 \\
         1600 & 2.7e-10 & 2.7e-10 & 2.7e-10 & 2.7e-10 & \textbf{2.7e-10} & \textbf{1.7e-11} &
2.1e-09 & 4.2e-09 \\
         3200 & 1.9e-10 & 1.9e-10 & 1.9e-10 & 1.9e-10 & 1.9e-10 & \textbf{1.9e-10} &
\textbf{1.3e-10} & 8.8e-10 \\
         6400 & 1.3e-10 & 1.3e-10 & 1.3e-10 & 1.3e-10 & 1.3e-10 & 1.3e-10 & \textbf{1.3e-10} &
\textbf{8.7e-11} \\
	\hline

    \end{tabular}
    \caption{
        Initial Rarefaction.
        Errors for the expected value $\E[u]$ of the numerical solutions for
$p=0$ and increasing $M$ and $N$.
    }
    \label{tab:errors_N-ref_E_rarefaction}
\end{table}

\begin{table}[!htb]
    \centering
    \begin{tabular}{ c | c | c | c | c | c | c | c | c }
            & \multicolumn{8}{c}{$\norm{\varepsilon_{\Var}}$} \\
        $N$ & $M=1$ & $M=2$ & $M=3$ & $M=4$ & $M=5$ & $M=6$ & $M=7$ & $M=8$  \\
        \hline \hline

          100 & 4.9e-06 & 2.5e-05 & 3.9e-05 & 4.4e-05 & 4.9e-05 & 5.4e-05 & 5.8e-05 & 6.1e-05 \\
          200 & 7.6e-09 & 3.9e-07 & 2.0e-06 & 3.5e-06 & 4.0e-06 & 4.4e-06 & 4.8e-06 & 5.2e-06 \\
          400 & \textbf{1.2e-09} & \textbf{1.0e-09} & 1.1e-08 & 1.1e-07 & 2.8e-07 & 3.5e-07 &
3.5e-07 & 3.7e-07 \\
          800 & 8.2e-10 & 8.2e-10 & \textbf{8.2e-10} & \textbf{5.6e-10} & 5.0e-09 & 2.6e-08 &
4.2e-08 & 3.8e-08 \\
         1600 & 5.6e-10 & 5.6e-10 & 5.6e-10 & 5.6e-10 & \textbf{5.6e-10} & \textbf{1.2e-10} &
3.5e-09 & 7.4e-09 \\
         3200 & 3.9e-10 & 3.9e-10 & 3.9e-10 & 3.9e-10 & 3.9e-10 & \textbf{3.9e-10} &
\textbf{3.0e-10} & 8.8e-10 \\
         6400 & 2.7e-10 & 2.7e-10 & 2.7e-10 & 2.7e-10 & 2.7e-10 & 2.7e-10 & \textbf{2.7e-10} &
\textbf{1.9e-10} \\
	\hline

    \end{tabular}
    \caption{
        Initial Rarefaction.
        Errors for the variance $\Var(u)$ of the numerical solutions for $p=0$ and increasing $M$
and $N$.
    }
    \label{tab:errors_N-ref_Var_rarefaction}
\end{table}

Note that for an unresolved truncated system (not sufficiently great $N$), the error increases
when the order $M$ in the polynomial chaos expansion is increased.
For instance, see $N=100$ and $N=200$.
Only for $N=400$ the error decreases when going over from $M=1$ to $M=2$.
Yet, afterwards the error increases again.
In our numerical tests, we observed a diagonal limit to be preferable.
In particular, the order $M$ in the polynomial chaos expansion and the spacial resolution (e.g. the
number of elements $N$) should be increased simultaneously.
This is demonstrated when going over from $M=3$ to $M=4$ for $N=800$, from $M=5$ to $M=6$ for
$N=1600$, from $M=6$ to $M=7$ for $N=3200$, and from $M=7$ to $M=8$ for $N=6400$.

A similar behaviour is observed when the spacial resolution is enhanced by increasing the
polynomial degree $p$.
This is demonstrated in Table \ref{tab:errors_p-ref_rarefaction} for the expected value and for the
variance.

\begin{table}[!htb]
\resizebox{\columnwidth}{!}{%
    \centering

    \begin{tabular}{ c | c | c | c | c | c | c | c | c | c | c | c | c }
            & \multicolumn{6}{c|}{$\norm{\varepsilon_{\E}}$}
            & \multicolumn{6}{c}{$\norm{\varepsilon_{\Var}}$} \\
        $p$ & $M=1$ & $M=2$ & $M=3$ & $M=4$ & $M=5$ & $M=6$
	    & $M=1$ & $M=2$ & $M=3$ & $M=4$ & $M=5$ & $M=6$ \\
        \hline \hline

          0 & 2.3e-08 & 5.7e-07 & 2.1e-06 & 3.3e-06 & 3.7e-06 & 4.1e-06
	    & 7.6e-09 & 3.9e-07 & 2.0e-06 & 3.5e-06 & 4.0e-06 & 4.4e-06 \\
          1 & 1.0e-09 & 1.0e-09 & \textbf{1.0e-09} & \textbf{8.6e-10} & \textbf{4.2e-10} & 6.0e-09
	    & 2.0e-09 & 2.0e-09 & \textbf{2.0e-09} & \textbf{1.8e-09} & \textbf{4.4e-10} & 8.5e-09
\\
          2 & 1.0e-09 & 1.0e-09 & 1.0e-09 & \textbf{1.0e-09} & \textbf{9.7e-10} & 2.0e-09
	    & 2.0e-09 & 2.0e-09 & 2.0e-09 & 2.0e-09 & 2.0e-09 & 3.5e-09 \\
          3 & 1.0e-09 & 1.0e-09 & 1.0e-09 & 1.0e-09 & \textbf{1.0e-09} & \textbf{9.3e-10}
	    & 2.0e-09 & 2.0e-09 & 2.0e-09 & 2.0e-09 & \textbf{2.0e-09} & \textbf{1.8e-09} \\
	\hline

    \end{tabular}

}
    \caption{
        Initial Rarefaction.
        Errors for the expected value $\E[u]$ of the numerical solutions for
$N=200$ and increasing $M$ and $p$.
    }
    \label{tab:errors_p-ref_rarefaction}
\end{table}

Again, $M$ and $p$ should be increased simultaneously.
Thus, the error is reduced when going over from $M=3$ to $M=4$ (and $M=5$) for $p=1$, from $M=4$ to
$M=5$ for $p=2$, and from $M=5$ to $M=6$ for $p=3$.
Yet, the error increases if the truncated system is not sufficiently resolved anymore for
increasing $M$.

Figure \ref{fig:rarefaction} illustrates the expected value $\E[u]$ as well as the 
variance
$\Var(u)$ for the reference
solution and the numerical solution for different parameters $M$ and $p$.

\begin{figure}[!htp]
\centering
\captionsetup[subfigure]{aboveskip=-8pt,belowskip=-1pt}
  \begin{subfigure}[b]{0.495\textwidth}
    \includegraphics[width=\textwidth]{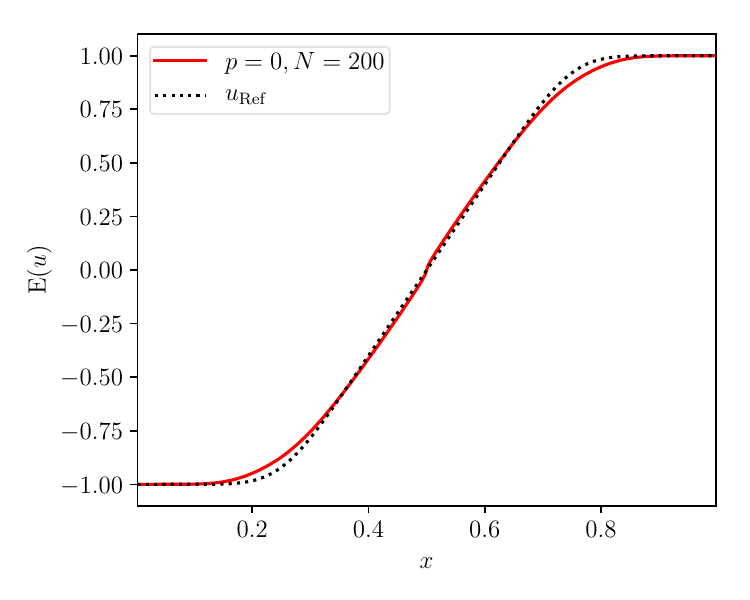}
    \caption{$M=1$, expectation $\E[u]$.}
    \label{fig:rarefaction_M_1__p_0__N_200__Eu}
  \end{subfigure}%
  ~
  \begin{subfigure}[b]{0.495\textwidth}
    \includegraphics[width=\textwidth]{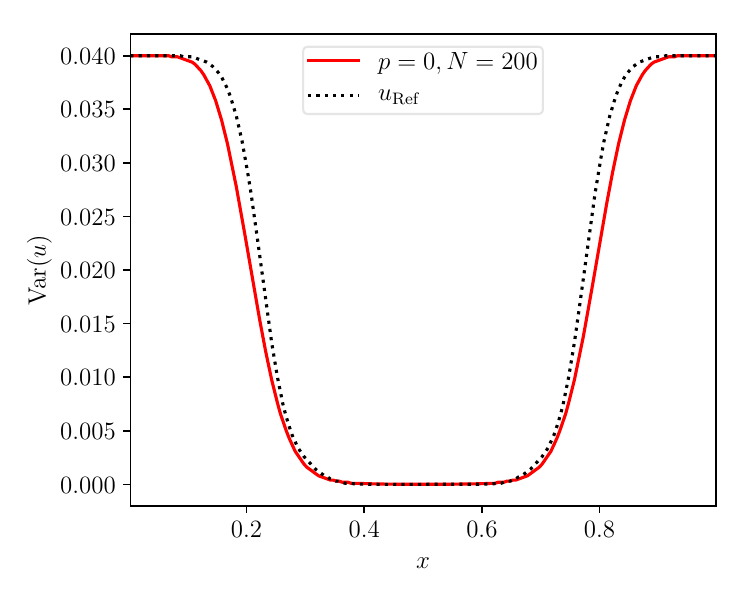}
    \caption{$M=1$, variance $\Var(u)$.}
    \label{fig:rarefaction_M_1__p_0__N_200__Varu}
  \end{subfigure}%
  \\
  \begin{subfigure}[b]{0.495\textwidth}
    \includegraphics[width=\textwidth]{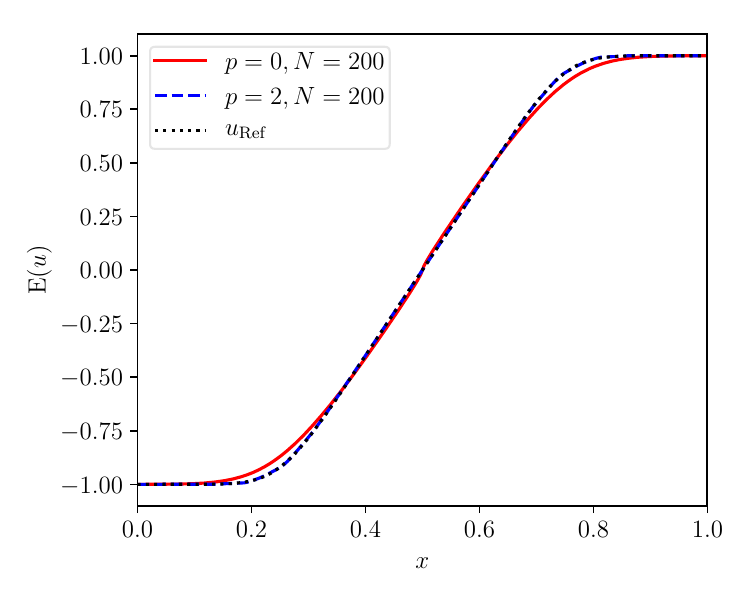}
    \caption{$M=6$, expectation $\E[u]$.}
    \label{fig:rarefaction_M_6__p_0__N_200_1600__Eu}
  \end{subfigure}%
  ~
  \begin{subfigure}[b]{0.495\textwidth}
    \includegraphics[width=\textwidth]{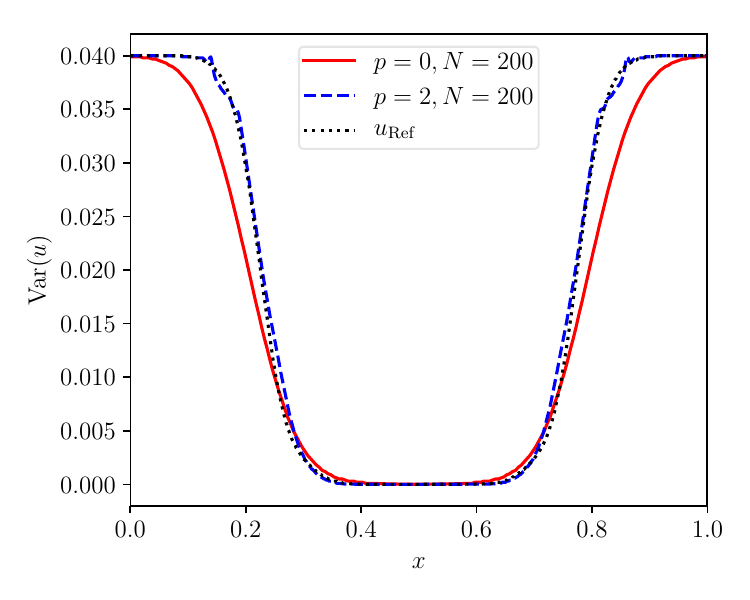}
    \caption{$M=6$, variance $\Var(u)$.}
    \label{fig:rarefaction_M_6__p_0__N_200_1600__Varu}
  \end{subfigure}%
  \caption{Rarefaction of uncertain height as initial condition for an expansion
  wave in the solution at time $t=0.25$.}
  \label{fig:rarefaction}
\end{figure}

In particular, we note that going from $M=1$ to $M=6$ for $p=0$ and $N=200$ the
numerical solution does not improve, see Figure \ref{fig:rarefaction_M_1__p_0__N_200__Eu}
compared to Figure \ref{fig:rarefaction_M_6__p_0__N_200_1600__Eu} (expected value)
and Figure \ref{fig:rarefaction_M_1__p_0__N_200__Varu} compared to
Figure \ref{fig:rarefaction_M_6__p_0__N_200_1600__Varu} (variance).
Yet, the numerical solution improves when we simultaneously increase the polynomial
degree from $p=0$ to $p=2$.

\subsection{Initial Shock}
\label{subsec:inital_shock}

Finally, the stochastic Riemann problem (\ref{shock}) with an initial
shock will be covered.
Here, the initial condition is
\begin{equation}\label{eq:Riemann_shock}
\begin{aligned}
  u(x,0, \xi(\omega)) = & \begin{cases}
		  u_L = a + p( \xi(\omega)) & \text{ if } x < x_0, \\
		  u_R = -a + p( \xi(\omega)) & \text{ if } x > x_0,
                 \end{cases} \\
   u(0,t,\xi(\omega)) = & \ u_L, \ u(1,t,\xi(\omega)) = u_R, \\
  \xi \sim & \ \mathcal{N}(0,1),
\end{aligned}
\end{equation}
with uncertain strength located at $x_0 \in [0,1]$ and function
$p( \xi(\omega)) = b\xi(\omega)$.
For the numerical tests, the parameters $a=1$, $b=0.2$ and $x_0 = 0.5$ are
considered.
As described in section~\ref{subsec:reference-solutions_shock}, Dirichlet boundary conditions
are used.
This problem has also been treated by Pettersson et al.
\cite{pettersson2009numerical,pettersson2015polynomial} and thus allows a
comparison of the numerical results.
Note that the reference solution $\uref$ was derived in the previous section
\ref{subsec:reference-solutions_shock} as the analytical solution of the system
(\ref{BurgerGalerkinUQ}) of infinite order.
For the numerical computations, however, the polynomial chaos expansion is
truncated and the system (\ref{BurgerGalerkinUQ}) is solved numerically.

Figure \ref{fig:shock_M_1_2_3__p_0__N_100_3200} displays the expected value $\E[u]$
and the variance $\Var(u)$ of the reference solution and different numerical
solutions for $p=0$ at time $t=0.5$. The numerical solutions are computed for
$N=100,3200$ and $M=1,2,3$.

\begin{figure}[!htp]
\centering
\captionsetup[subfigure]{aboveskip=-8pt,belowskip=-1pt}
  \begin{subfigure}[b]{0.468\textwidth}
    \includegraphics[width=\textwidth]{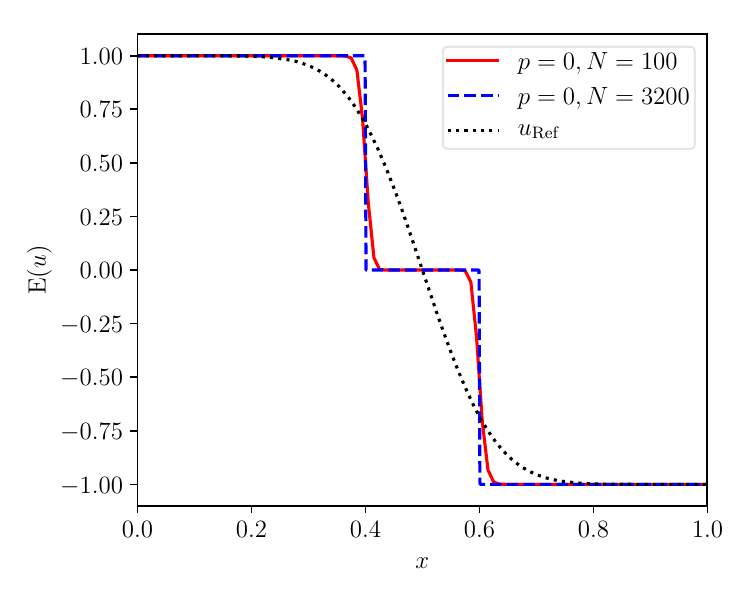}
    \caption{$M=1$, expectation $\E[u]$.}
    \label{fig:shock_M_1__p_0__N_100_3200__Eu}
  \end{subfigure}%
  ~
  \begin{subfigure}[b]{0.468\textwidth}
    \includegraphics[width=\textwidth]{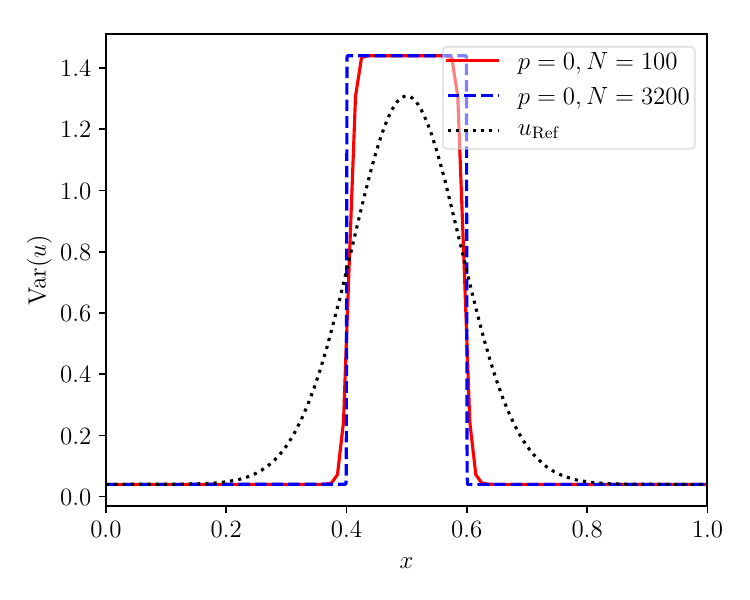}
    \caption{$M=1$, variance $\Var(u)$.}
    \label{fig:shock_M_1__p_0__N_100_3200__Varu}
  \end{subfigure}%
  \\
  \begin{subfigure}[b]{0.468\textwidth}
    \includegraphics[width=\textwidth]{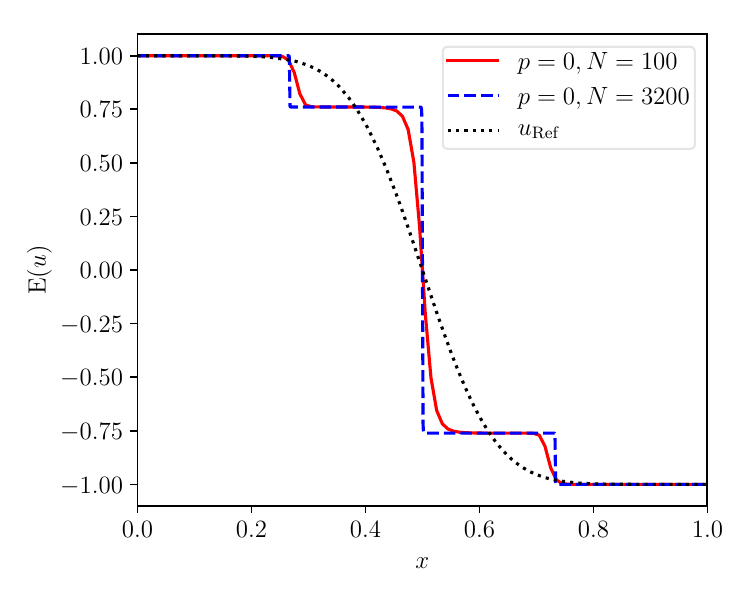}
    \caption{$M=2$, expectation $\E[u]$.}
    \label{fig:shock_M_2__p_0__N_100_3200__Eu}
  \end{subfigure}%
  ~
  \begin{subfigure}[b]{0.468\textwidth}
    \includegraphics[width=\textwidth]{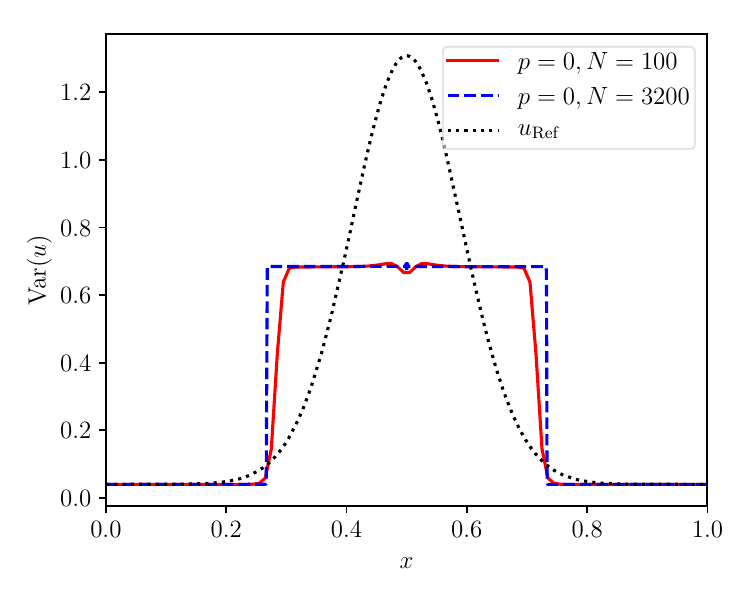}
    \caption{$M=2$, variance $\Var(u)$.}
    \label{fig:shock_M_2__p_0__N_100_3200__Varu}
  \end{subfigure}%
  \\
  \begin{subfigure}[b]{0.468\textwidth}
    \includegraphics[width=\textwidth]{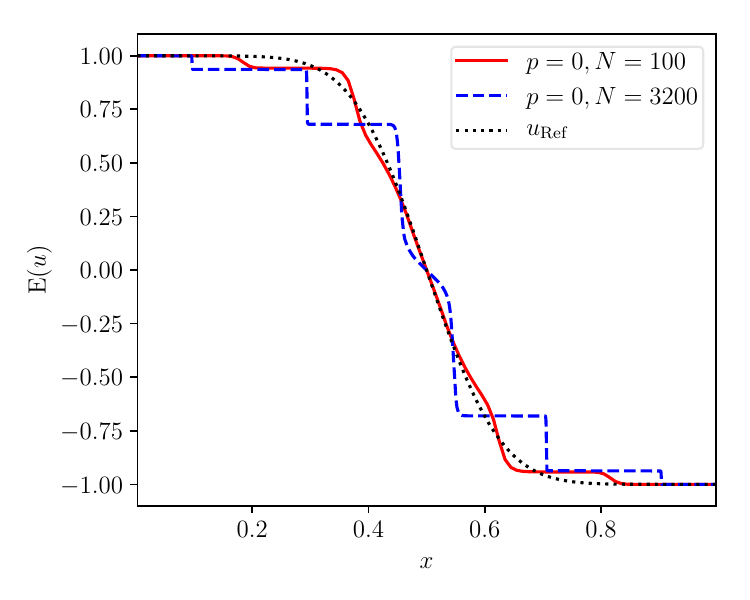}
    \caption{$M=3$, expectation $\E[u]$.}
    \label{fig:shock_M_3__p_0__N_100_3200__Eu}
  \end{subfigure}%
  ~
  \begin{subfigure}[b]{0.468\textwidth}
    \includegraphics[width=\textwidth]{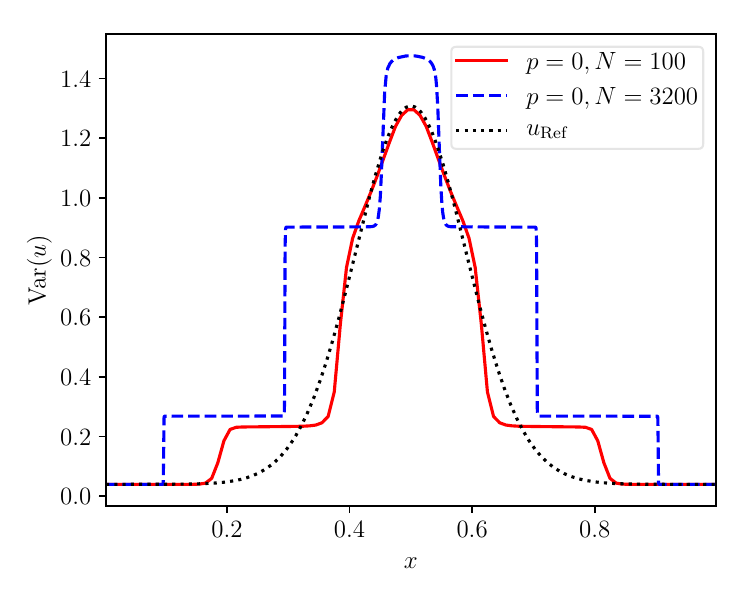}
    \caption{$M=3$, variance $\Var(u)$.}
    \label{fig:shock_M_3__p_0__N_100_3200__Varu}
  \end{subfigure}%
  \caption{Solution at time $t=0.5$ for a shock of uncertain height as initial condition.}
  \label{fig:shock_M_1_2_3__p_0__N_100_3200}
\end{figure}

For this test case distinct structures can be observed for the truncated systems
of increasing order $M$.
While these structures are blurred for the numerical solutions using only $N=100$
elements, they are better resolved for the numerical solution using $N=3200$ elements
(compared to a grid convergence study using entropy stable finite volume schemes
not presented here in detail).
Thus, it should be stressed that an increasing number of elements, i.e. an enhanced spacial
resolution, does not result in more accurate numerical solutions compared to the reference
solution of the system (\ref{BurgerGalerkinUQ}) of infinite order.
A similar behaviour was already observed in \cite{pettersson2009numerical} for
this test case: Instead of increasing the polynomial chaos order $M$, one
should increase the dissipation that is added to the scheme to smooth
the numerical solution. A scheme with a lot of dissipation like the FV scheme with $N=100$
will lead to a solution that appears to be better in comparison with the reference
solution for $M \to \infty$. That the dissipation in this test case is very
important will also be seen in the following.

Surprisingly, for $M=3$, the numerical solution using $N=100$ elements is not only blurred
compared to the more resolved numerical solution using $N=3200$ elements, but displays the
jump at different locations. This is illustrated in greater detail in Figure
\ref{fig:shock_M_3__p_0__N_100_200_400_800_1600_3200}, where the numerical
solutions for $M=3$, $p=0$, and increasing $N=100,200,400,800,1600,3200$ are shown.

\begin{figure}[!htp]
\centering
\captionsetup[subfigure]{aboveskip=-8pt,belowskip=-1pt}
  \begin{subfigure}[b]{0.468\textwidth}
    \includegraphics[width=\textwidth]{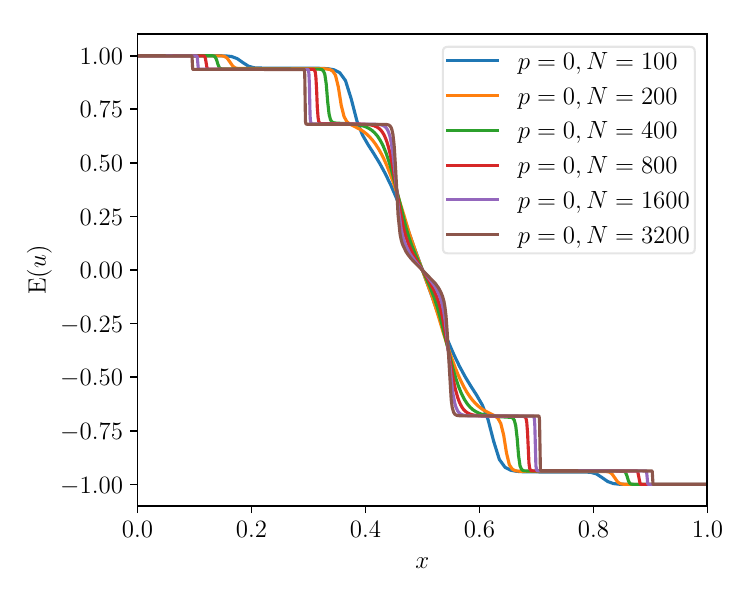}
    \caption{$M=3$, expectation $\E[u]$.}
    \label{fig:shock_M_3__p_0__N_100_200_400_800_1600_3200__Eu}
  \end{subfigure}%
  ~
  \begin{subfigure}[b]{0.468\textwidth}
    \includegraphics[width=\textwidth]{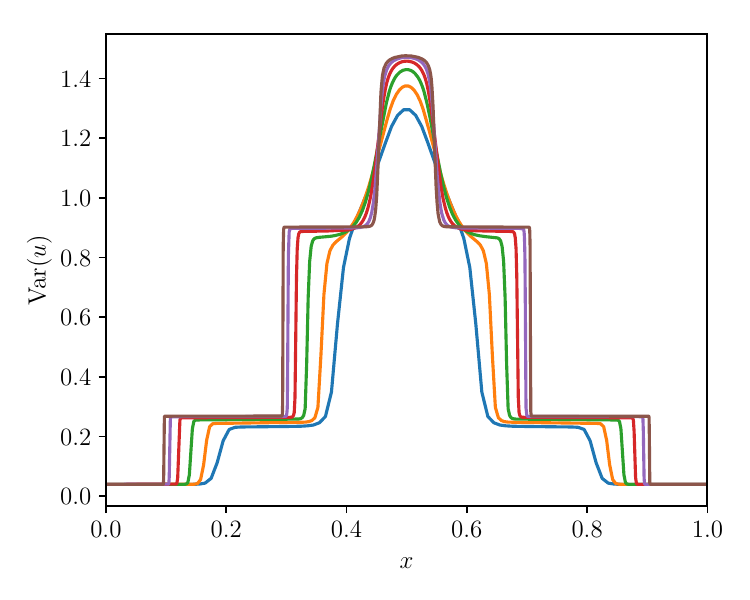}
    \caption{$M=3$, variance $\Var(u)$.}
    \label{fig:shock_M_3__p_0__N_100_200_400_800_1600_3200__Varu}
  \end{subfigure}%
  \caption{Solution at time $t=0.5$ for a shock of uncertain height as initial condition.}
  \label{fig:shock_M_3__p_0__N_100_200_400_800_1600_3200}
\end{figure}
We note that the jump locations are observed to vary with the spacial resolution.
A similar behaviour is observed when the polynomial degree $p$ is increased. Then,
we get additionally spurious oscillations, resulting from the Gibbs phenomenon.
However, a further detail can be observed for the finite order systems.
The numerical solution for higher polynomial degrees
indicates another structure of the solution.
In what follows, this phenomenon is investigated  in greater detail.

For $t=0.5$, Figure~\ref{fig:shock-CPR-vs-FV} displays the expectation $\E[u]$
and variance $\Var(u)$ of the numerical solutions from the SBP CPR and FV
methods.
There, also $\E[u]$ and $\Var(u)$ of the reference solution $\uref$ are
illustrated.
While $N=\num{2 500}$ elements were used for the SBP CPR method with polynomial
degree $p=3$, for the corresponding FV methods $N = \num{10 000}$ elements were used.
In the following for both methods, \num{100 000} time steps were used.

\begin{figure}[!htp]
\centering
\captionsetup[subfigure]{aboveskip=-8pt,belowskip=-1pt}
  \begin{subfigure}[b]{0.495\textwidth}
    \includegraphics[width=\textwidth]{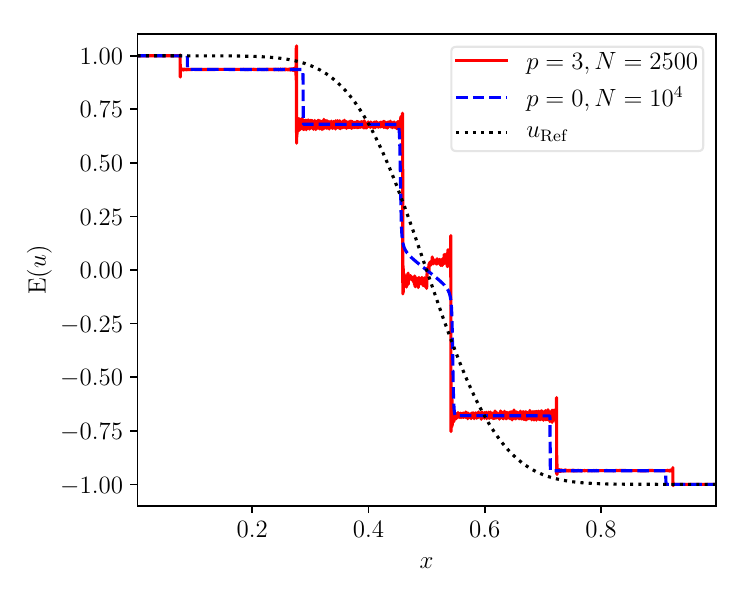}
    \caption{Expectation $\E[u]$.}
  \end{subfigure}%
  ~
  \begin{subfigure}[b]{0.495\textwidth}
    \includegraphics[width=\textwidth]{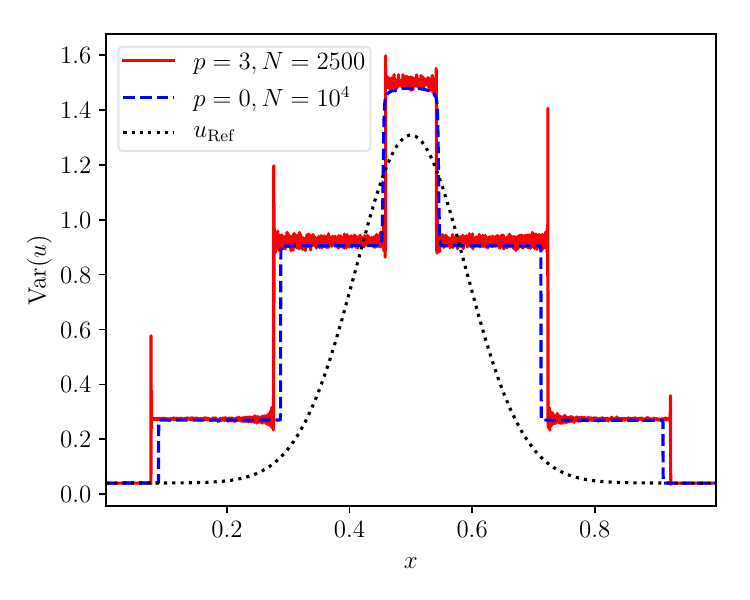}
    \caption{Variance $\Var(u)$.}
  \end{subfigure}%
  \caption{Solution at time $t=0.5$ for a shock of uncertain height as initial condition.}
  \label{fig:shock-CPR-vs-FV}
\end{figure}

Three observations should be pointed out immediately:
\begin{enumerate}
  \item
 Both numerical solutions differ significantly from the reference
  solution $\uref$.

  \item
  Both numerical solutions show more wave-fronts than one would expect
  from classical theory of Riemann problems for strictly hyperbolic systems with
  genuinely nonlinear or linearly degenerate fields.

  \item
  Also the numerical solutions themselves show quite different features,
  especially in their wave profiles around $x_0 = 0.5$.
\end{enumerate}

 The first observation was also pointed out by Pettersson et al.
\cite{pettersson2009numerical,pettersson2015polynomial} and arises from the
truncation of the infinite order system \eqref{shock} to the four dimensional
system \eqref{BurgerGalerkinUQ} for $M=3$.
Thus, the analytical solution $\uref$ of the infinite order system for $M
\to \infty$ differs from 'the analytical solution' of the truncated system.
The latter one is however approximated by the numerical solutions.
The difference between $\uref$ and the analytical solution to the
truncated system already gets stressed by mismatching regularities.
While $\uref$ was shown to be smooth in the last
section, the solution of the truncated system is expected to feature
discontinuities.  We saw this already before in our numerical tests and
refer again to Remark
\ref{discontinuities_problems} and the literature therein.

Naturally the following question arises:
\emph{What is the analytical solution of the truncated system?}
For Riemann problems of strictly hyperbolic systems with
genuinely nonlinear or linearly degenerate fields, there are in fact clear
results in the literature \cite{lax1973hyperbolic} on how solutions behave.
In particular, the analytical solution of a $M+1=4$ dimensional system
consists of at most $M+2=5$ constant states which are connected by shock
discontinuities or expansion waves.
At the same time the numerical solutions in Figure~\ref{fig:shock-CPR-vs-FV}
both show at least $6$ such constant states.
Thus, classical theory of Riemann problems for strictly hyperbolic systems
obviously fails.
This is due to the assumption of the system to be
strictly hyperbolic, i.e. to have real distinct eigenvalues.
Already the steady state $(u_0,u_1,u_2,u_3) = (0,0,0,0)$ yields matrix
$A(u) = 0$ in \eqref{eq:M=3matrix} to have eigenvalues
$\lambda_1=\lambda_2=\lambda_3=\lambda_4=0$.
Non distinct eigenvalues can also be observed in Figures
\ref{fig:shock-FV-coef-eigvals-U-F} and \ref{fig:shock-FD-coef-eigvals-U-F} for
the numerical solutions of this particular Riemann problem.

\begin{re}
For scalar conservation laws, the PC approach yields a symmetric
system. Due to the symmetry, the arising system is therefore hyperbolic, see
\cite{xiu2010numerical,chertock2015operator}.
In general, the arising system is however nonstrictly hyperbolic.
To calculate the exact analytical solution for this system, the eigenvalues have
to be known. Indeed, further results can be found in the literature, e.g. that they are analytical
\cite[Chapter II, Theorem 1.8]{kato1995perturbation}.
Nevertheless, without further assumptions on the entries of the $4 \times 4$
symmetric matrix, the eigenvalues can not be expressed by a simple and closed formula
[private communication with Harald Löwe, TU Braunschweig].
This can be proved by an algebraic approach and it is  beyond the scope of this paper.
\end{re}

Finally, the third observation - different profiles of the numerical
solutions - shall be addressed.
In Figure~\ref{fig:shock-CPR-vs-FV}, the numerical solutions for the SBP CPR
method and the corresponding FV method essentially differ in three aspects:
\begin{enumerate}
  \item Their behaviour near the centre $x_0 = 0.5$.
  \item The position of the shock discontinuities away from $x_0=0.5$, e.g. at
$x \approx 0.276$ for the SBP CPR method and at $x \approx 0.287$ for the FV
method.
  \item The height of the constant states. This can't be seen without zooming
in, which is therefore done in Figure~\ref{fig:shock-scaled-dissipation}.
\end{enumerate}

Noticing these differences, another question arises:
\emph{What is the mechanism behind this?}

By a large number of different tests for the SBP CPR, FV and SBP FD
method, the numerical dissipation added by the underlying scheme was explored
as the determining factor.
As Figure~\ref{fig:shock-scaled-dissipation} demonstrates, different
profiles for the expectation $\E[u]$ can be reproduced by all of the three
methods.
The plot (a) and (b) show the expectation $\E[u]$ and variance $\Var(u)$ for the 
SBP CPR method, (c) and (d) for the FV method, and (e) and (f) for the SBP FD
method.
The (red) solid line thereby illustrates a numerical solution obtained by
the corresponding method equipped with low numerical dissipation.
The (blue) dashed line, on the other hand, illustrates a numerical solution
obtained by the corresponding method equipped with high numerical dissipation.

\begin{figure}[!htp]
\centering
\captionsetup[subfigure]{aboveskip=-8pt,belowskip=-1pt}
  \begin{subfigure}[b]{0.468\textwidth}
    \includegraphics[width=\textwidth]{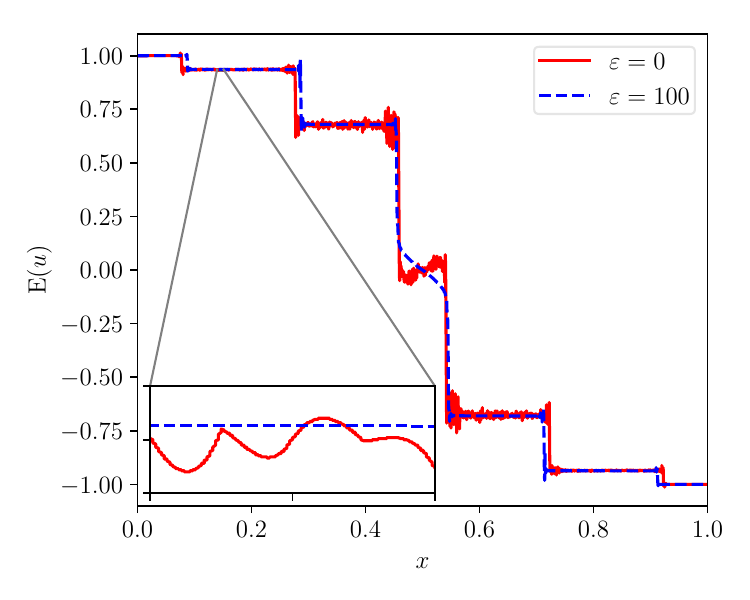}
    \caption{CPR, expectation $\E[u]$.}
  \end{subfigure}%
  ~
  \begin{subfigure}[b]{0.468\textwidth}
    \includegraphics[width=\textwidth]{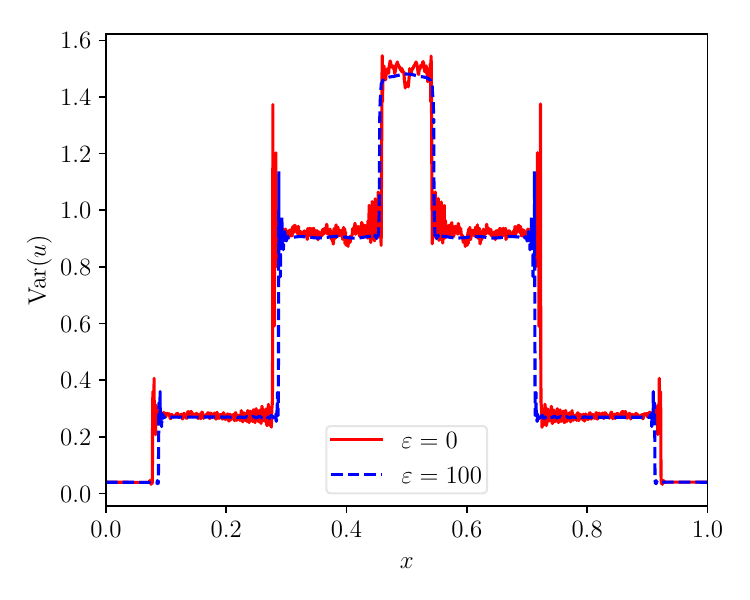}
    \caption{CPR, variance $\Var(u)$.}
  \end{subfigure}%
  \\
  \begin{subfigure}[b]{0.468\textwidth}
    \includegraphics[width=\textwidth]{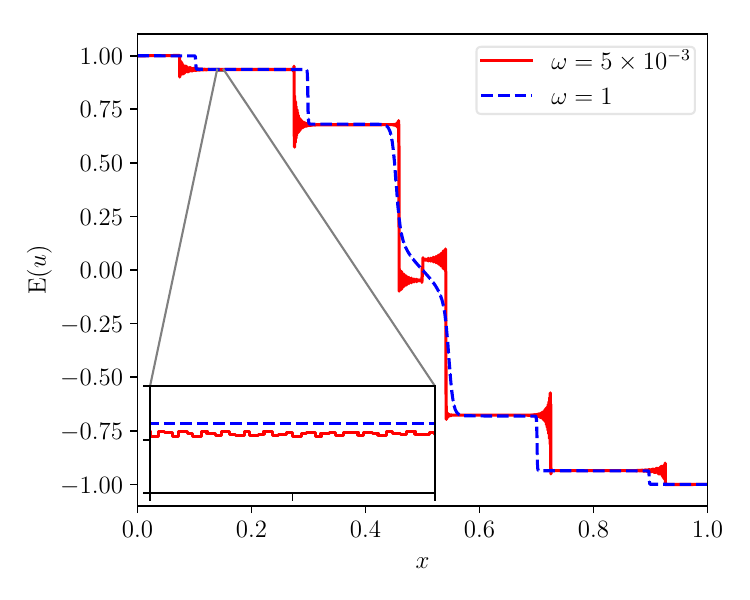}
    \caption{FV, expectation $\E[u]$.}
  \end{subfigure}%
  ~
  \begin{subfigure}[b]{0.468\textwidth}
    \includegraphics[width=\textwidth]{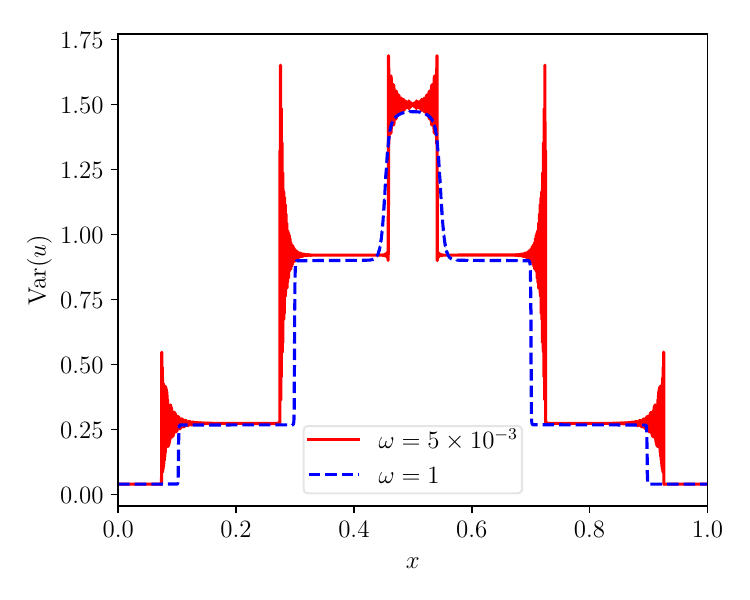}
    \caption{FV, variance $\Var(u)$.}
  \end{subfigure}%
  \\
  \begin{subfigure}[b]{0.468\textwidth}
    \includegraphics[width=\textwidth]{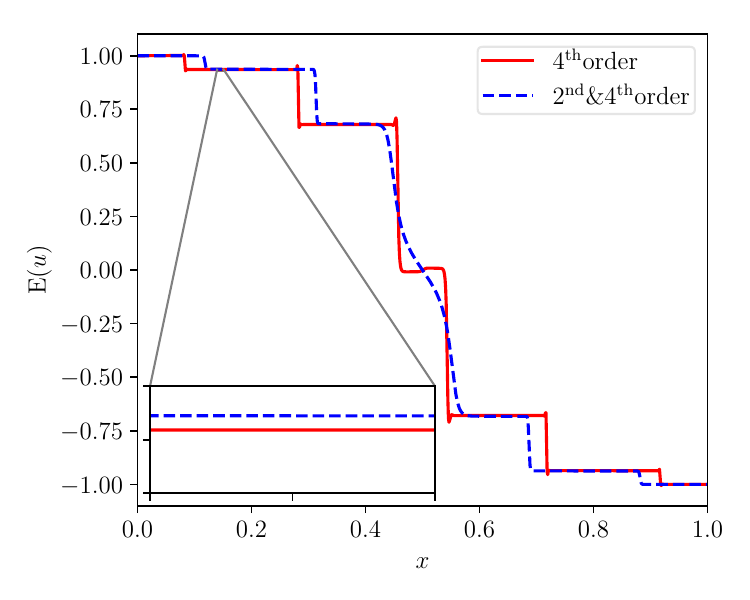}
    \caption{FD, expectation $\E[u]$.}
  \end{subfigure}%
  ~
  \begin{subfigure}[b]{0.468\textwidth}
    \includegraphics[width=\textwidth]{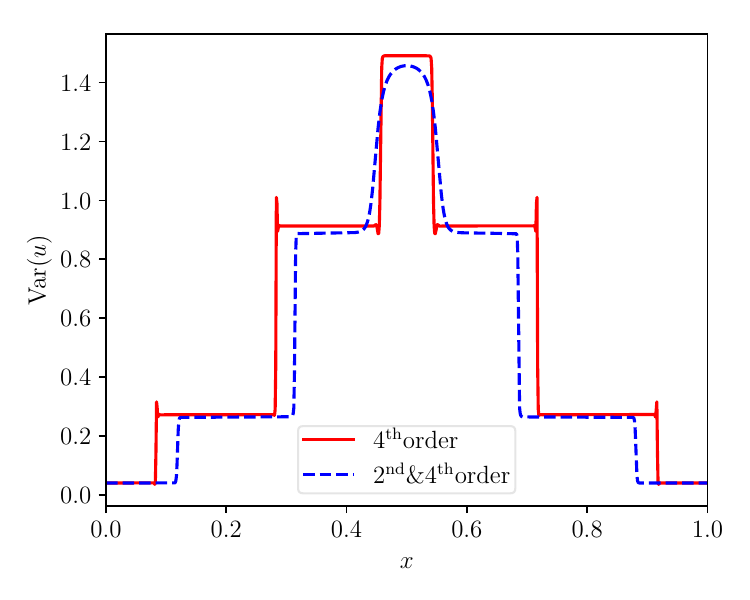}
    \caption{FD, variance $\Var(u)$.}
  \end{subfigure}%
  \caption{Scaled numerical dissipation for the shock of uncertain height as
           initial condition. Numerical solutions by SBP CPR, FV and SBP FD.
           Parameters: Polynomial chaos order $M=3$, final time $t=0.5$,
           inflow boundary conditions.}
  \label{fig:shock-scaled-dissipation}
\end{figure}

In case of the SBP CPR method, numerical dissipation was added by applying
modal filtering by an exponential filter of order $s=1$ and strength
$\varepsilon = 100$ in every element and after every time step.
See \cite{ranocha2018stability, glaubitz2016artificial, glaubitz2018application}
for details.

For the FV method however, already showing a smeared profile, numerical
dissipation was reduced.
This was done by multiplying the dissipation matrix added to the entropy
conservative flux with decreasing weights $0 < \omega \leq 1$.

Numerical dissipation in the SBP FD method of Pettersson et al.
\cite{pettersson2009numerical,pettersson2015polynomial}
refers to artificial dissipation terms of second and fourth order,
see \cite{mattsson2004stable, nordstrom2006conservative} for details.
Thus, the original conservation law is extended by viscosity terms of second
and fourth order derivatives which are properly discretised and weighted.
To reduce numerical dissipation, the second order derivative was nullified and
just the fourth order derivative was used for the artificial dissipation.
The results for the SBP FD method were furthermore computed by the matlab code
of Pettersson et al. which they have offered very well prepared in
\cite{pettersson2015polynomial}.

Summarising the results from the described numerical tests, the
following can be observed.
\begin{re}
  The numerical solutions of the truncated system \eqref{eq:Riemann_shock} for
$M=3$ differ significantly with respect to the numerical dissipation added by
the underlying scheme.
In particular, the wave profile near $x_0 = 0.5$ shows
quite varying features.\\  As it is already investigated and summarised in
\cite[Chapter 6]{pettersson2015polynomial} for SBP FD methods, the influence of dissipation is
enormously in the PC approach.
Excessive use of artificial dissipation can give a numerical solution that more closely resemble the
solution for the original problem ($M\to \infty $) compared to a solution where
the order of the polynomial chaos method is increased and only a small amount of dissipation is
applied, but these features of the deterministic $4\times 4$ system are not even mentioned there.
The numerical solutions may differ significantly depending on the artificial dissipation of the schemes,
see for instance \cite{mattsson2004stable}. But even with this knowledge, these specifics
look impressive.
From a heuristic point of view, the numerical solutions calculated with the higher amount
of dissipation seem more reasonable after all, but to be sure, one has to analyse it.
The impression suggests a contact discontinuity at this point, which yields  these different
numerical solutions depending on the dissipation.

\end{re}

Since no analytical solution of the observed system is known, other criteria
should be examined.
The idea is to identify numerical solutions which are physically reasonable and
to reject the ones which are not.
For solutions of hyperbolic conservation laws to be physically reasonable,
typically two conditions are checked.
First, the \emph{Rankine-Hugoniot jump condition}
\begin{equation}\label{RH}
  s \jump{u_k} = \jump{f_k}
\end{equation}
across every discontinuity, where $s$ is the speed of propagation of
the discontinuity.
Second, the
\emph{entropy inequality} $\partial_t U + \partial_x F \leq 0$, which is
equivalent to a \emph{Rankine-Hugoniot condition for the entropy}
\begin{equation}\label{RH_entropy}
  \jump{F} \leq s \jump{U}.
\end{equation}
The left hand sides (a) of Figures \ref{fig:shock-FV-coef-eigvals-U-F} and
\ref{fig:shock-FD-coef-eigvals-U-F} each show results from the corresponding
method using more dissipation.
The results on the right hand sides (b) each show results from the same method
using
significant less numerical dissipation.

\begin{figure}[!htp]
\centering
\captionsetup[subfigure]{aboveskip=-4pt,belowskip=-1pt}
  \begin{subfigure}[b]{0.495\textwidth}

\includegraphics[width=\textwidth]{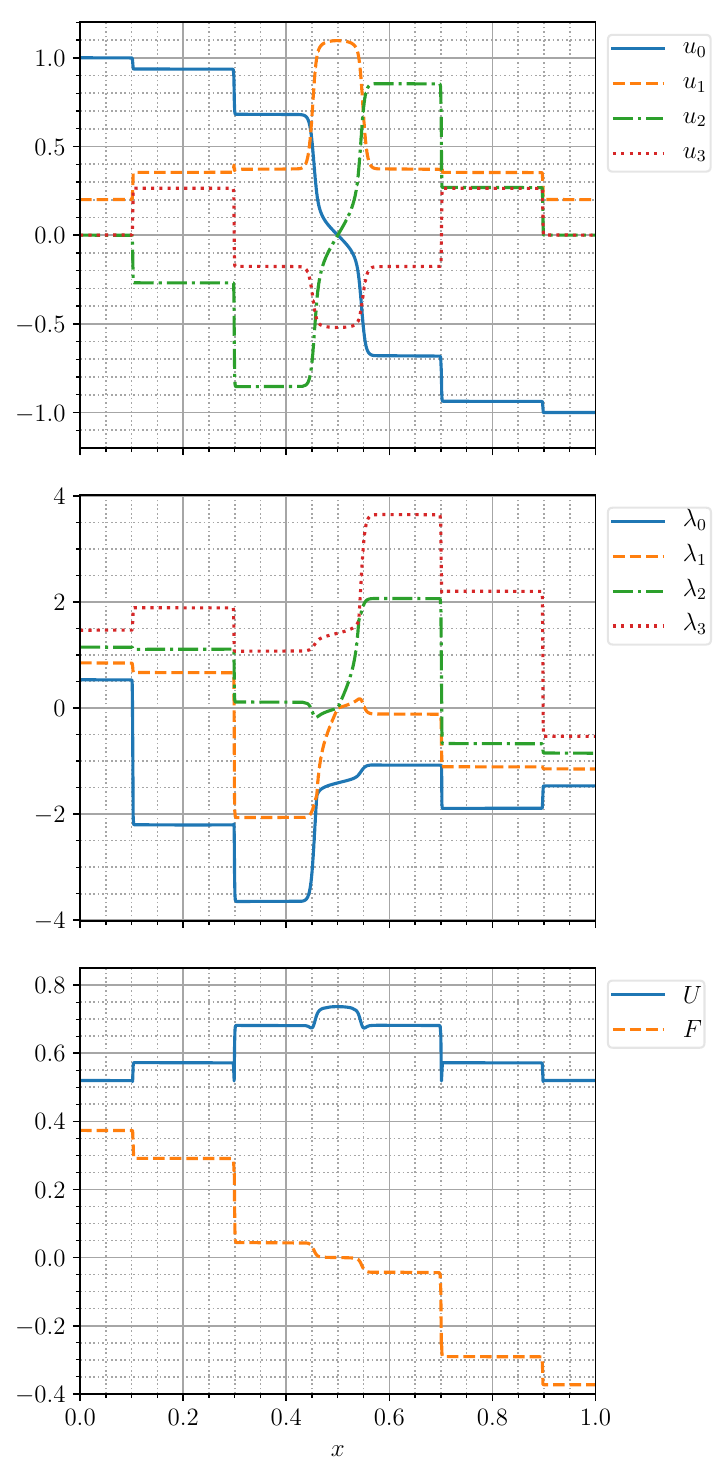}
    \caption{$\omega = 1$.}
  \end{subfigure}%
  ~
  \begin{subfigure}[b]{0.495\textwidth}

\includegraphics[width=\textwidth]{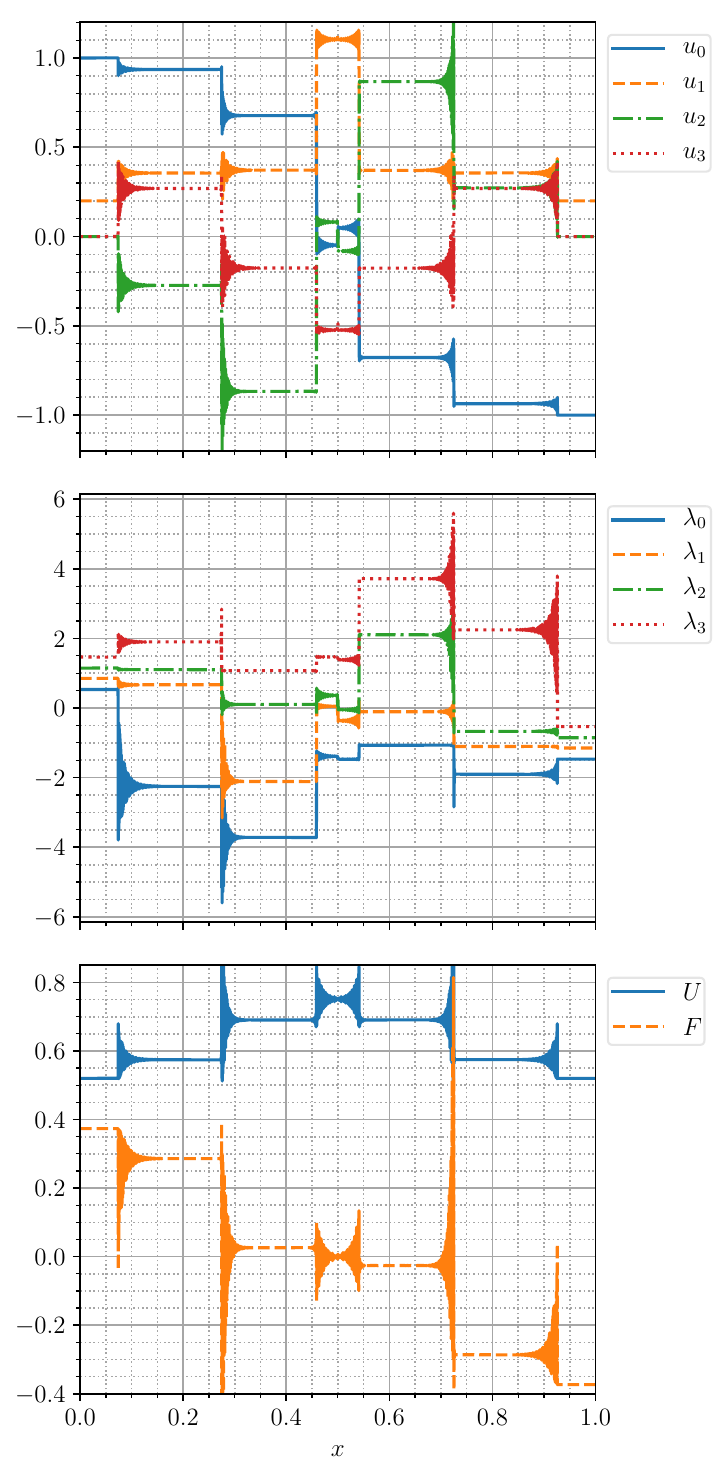}
    \caption{$\omega = 5 \times 10^{-3}$.}
  \end{subfigure}%
  \caption{Numerical solutions, eigenvalues, entropy and entropy flux for
  scaled numerical dissipation for the shock of uncertain height as initial
condition.
  Numerical solutions by FV with 'full' dissipation $\omega=1$ and reduced
dissipation $\omega = 5 \times 10^{-3}$.
  Parameters: Polynomial chaos order $M=3$, time boundary $t=0.5$,
  inflow boundary conditions.}
  \label{fig:shock-FV-coef-eigvals-U-F}
\end{figure}

The FV solution in Figure~\ref{fig:shock-FV-coef-eigvals-U-F}
already contains quite spurious oscillations (and the CPR solution even more).
Similar plots are given in Figure~\ref{fig:shock-FD-coef-eigvals-U-F} for
the FD solution.
These plots show nearly no oscillations.

\begin{figure}[!htp]
\centering
\captionsetup[subfigure]{aboveskip=-4pt,belowskip=-1pt}
  \begin{subfigure}[b]{0.495\textwidth}
    \includegraphics[width=\textwidth]{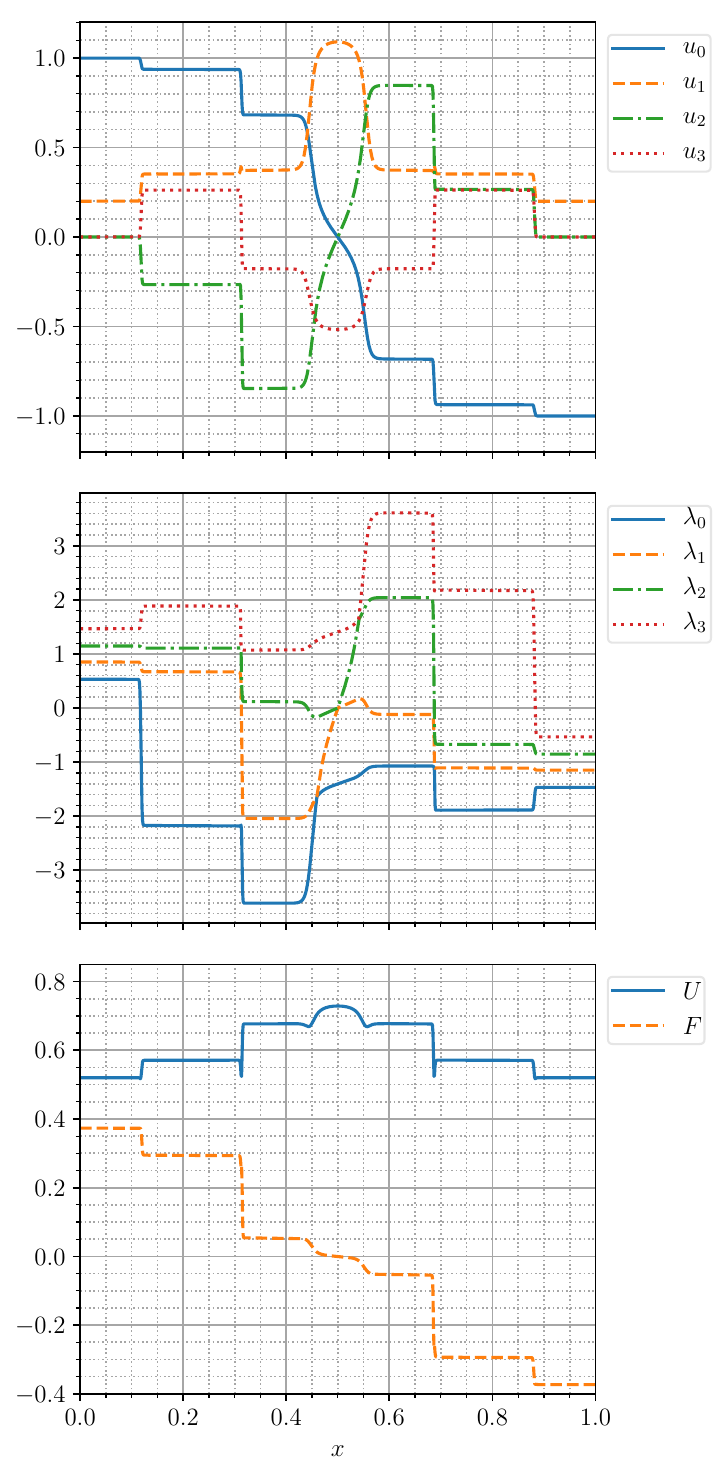}
    \caption{$2^\mathrm{nd}$ \& $4^\mathrm{th}$ order dissipation.}
  \end{subfigure}%
  ~
  \begin{subfigure}[b]{0.495\textwidth}
    \includegraphics[width=\textwidth]{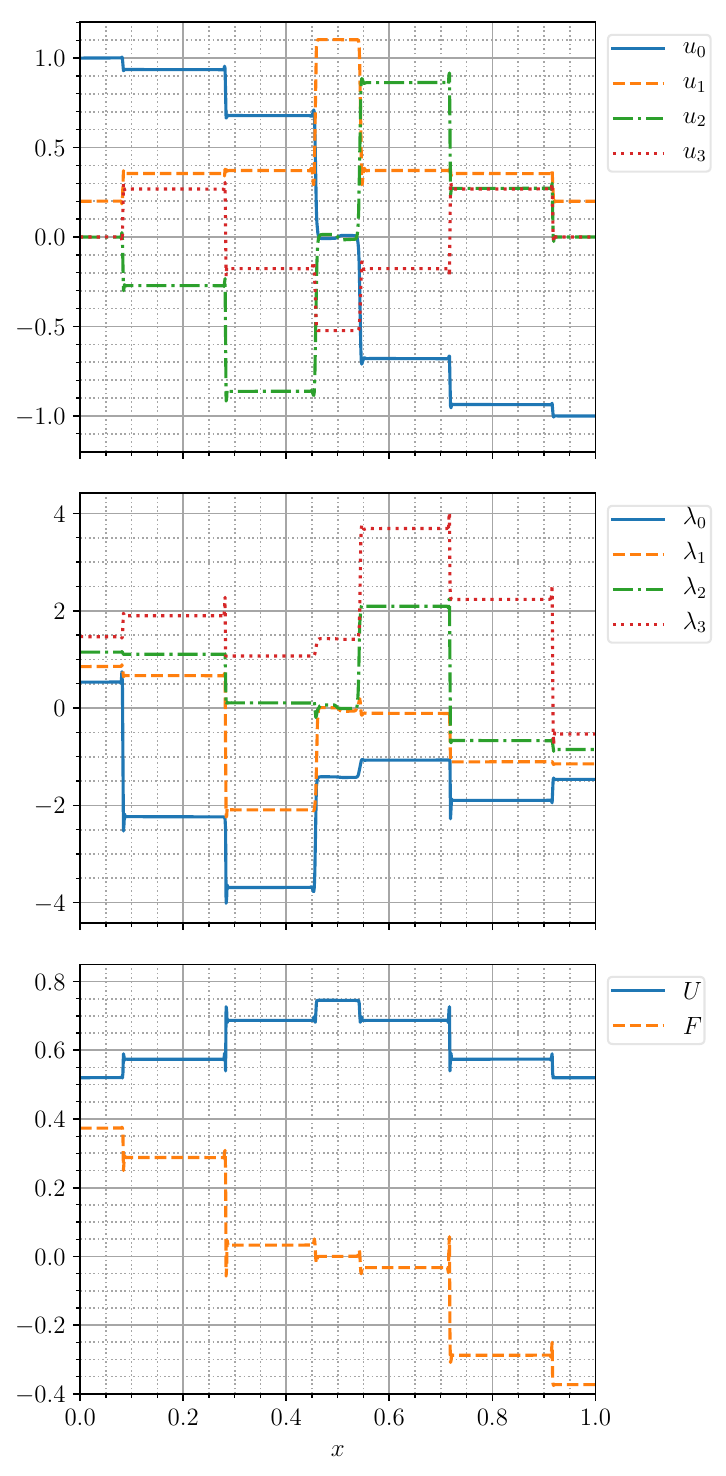}
    \caption{$4^\mathrm{th}$ order dissipation.}
  \end{subfigure}%
  \caption{Numerical solutions, eigenvalues, entropy and entropy flux for
  scaled numerical dissipation for the shock of uncertain height as initial
condition.
  Numerical solutions by SBP FD with 'full' dissipation by both, the
$2^\mathrm{th}$ and $4^\mathrm{th}$ order dissipation, and reduced dissipation
just by the $4^\mathrm{th}$ order dissipation.
  Parameters: Polynomial chaos order $M=3$, time boundary $t=0.5$,
  inflow boundary conditions.}
  \label{fig:shock-FD-coef-eigvals-U-F}
\end{figure}

Since the essential differences arise near the centre $x_0=0.5$, the focus is
on
the profiles of the numerical solutions there.
Both methods lead to smooth non-constant transitions at $x_0=0.5$ for high
numerical dissipation, see (a) in Figures \ref{fig:shock-FV-coef-eigvals-U-F}
and
\ref{fig:shock-FD-coef-eigvals-U-F}.
At the same time, there is a nearly constant transition for the FD solution
with low dissipation, see (b) in Figure~\ref{fig:shock-FD-coef-eigvals-U-F},
and a piecewise constant transition with a up-jump at $x_0=0.5$ for the FV
solution with even lower numerical dissipation, see (b) in Figure
\ref{fig:shock-FV-coef-eigvals-U-F}.

Except for the last case, the Rankine-Hugoniot jump condition is obviously
fulfilled, since no discontinuity occurs.
The FV solution (b) in Figure~\ref{fig:shock-FV-coef-eigvals-U-F} also
fulfils this condition with speed of propagation $s=0$.

Furthermore, also the Rankine-Hugoniot jump condition for the entropy
\eqref{RH_entropy} is obviously satisfied by all solutions due to no jumps in
the entropy $U$ and flux $F$ at $x_0 = 0.5$.

 Both approaches do not answer the question of what the physically relevant solution for this system is.
Other investigations are necessary.

\section{Summary and Conclusions}
\label{sec:summary}

In this work, a  polynomial chaos approach for Burgers' equation has been 
applied and the resulting hyperbolic system has been considered in the general 
framework of CPR methods using SBP operators.
Besides conservation, focus was especially given to stability, which was 
proven for the CPR method and all systems arising from the PC approach.
Due to the usage of split-forms similar to \cite{fisher2013high, 
carpenter2014entropy, carpenter2013high}, the major challenge was to construct 
entropy stable numerical fluxes.
For the first time, this was done rigorously for all systems resulting from the 
PC approach for Burgers' equation.

Furthermore, numerical results for two different test cases have been 
examined. 
Burgers' equation with an initial rarefaction demonstrated 
convergence for the truncated system, whereas for the convergence study 
to the reference solution a diagonal limit should be used. 
More interesting, they also highlighted clear differences in numerical 
dissipation added by the schemes.
It became clear in the last test case that this is crucial. 
In fact, the last test case, i.e. Burgers’ equation with an initial shock, 
has been the most remarkable one. 
Quite fascinating observations have been highlighted. 

All numerical solutions showed more wave fronts than one would expect from 
classical theory of Riemann problems for strictly hyperbolic systems with
genuinely nonlinear or linearly degenerate fields.
Furthermore, the numerical solutions featured quite different 
behaviours, especially in their wave profiles around $x_0=0.5$, highly 
depending on the numerical dissipation.
It seems likely that the numerical schemes with a lot of artificial dissipation 
yields the \emph{correct} numerical solutions, but it is still not clear and 
must further be examined. 

In fact, it remains an open problem for nonstrictly hyperbolic systems of 
conservation laws what (entropy) conditions might ensure uniqueness and even 
existence of solutions.
All of them, just one, or none of the numerical solutions might indeed converge to a 
reasonable solution.
Nevertheless, a quite fascinating dependence on the added numerical 
dissipation could be observed.

At this point, a broad field of open problems for the analytical as well as 
numerical treatment of (nonstrictly) hyperbolic problems arises.
In particular, the authors look forward to further research on this.

\section*{Acknowledgements}
The authors would like to thank Harald Löwe for his helpful investigation
and comments about the eigenvalues of a symmetric matrix.
Moreover, they would like to thank the anonymous reviewers very much for their
helpful comments which helped in improving this article.

\appendix
\section{Appendix}
\label{Appendix}

\subsection{Hermite Polynomials}
\label{Appendix:Hermite}

The probabilistic version of the Hermite polynomials\footnote{There is another
way to normalise the Hermite polynomials which is applied mostly in mathematical physics.
Here, we follow  the definition which is used in probability theory and, therefore,
sometimes the polynomials are also called probabilistic Hermite polynomials \cite{pettersson2009numerical}. } has as weight function
$\varrho(y)=\frac{1}{\sqrt{2 \pi}} \exp{\frac{-y^2}{2}}$. This is the probability density
function of a Gaussian distribution. Therefore, using normalised Hermite
polynomials as basis
functions is an intuitive choice.
A table of the ``natural'' orthogonal basis functions in dependence of different
distributions of random variables is given in \cite[Table 4.1]{xiu2002wiener}.
Here, we restrict ourselves to Gaussian measures and so we will only repeat the main
properties of the normalised Hermite polynomials.
These and further results can be found in \cite{abramowitz1972handbook,
szego1975orthogonal}.

The inner product of the normalised Hermite polynomials
$\phi $ of a Gaussian variable $\xi $ is defined by
\begin{equation}
\E[\phi_i \phi_j]
 =
 \int_{\R} \phi_i(y) \phi_j(y) \varrho(y)  \dif y
 =
 \delta_{i,j} .
\end{equation}
The triple product is given by
\begin{equation}
\label{Hermitetripel}
 \E[\phi_i \phi_j \phi_k]
 =
  \begin{cases}
  0  \qquad &\text{ if $ i+j+k$ is odd  or $\max(i,j,k)>s$ },\\
  \frac{\sqrt{i!j!k!}}{(s-i)! (s-j)! (s-k)! } \qquad &\text{ otherwise},
\end{cases}
\end{equation}
where $s=(i+j+k)/2$.

We also need the following relation
\begin{equation}
\label{recurrencerelation}
  \sqrt{i} \phi_{i} \varrho = - \left(
\phi_{i-1} \varrho \right)'.
\end{equation}

\subsection{Stability of CPR Method}
\label{Appendix_Stability}
Here, we present the calculation for equation \eqref{Gleichungstabilitaet1} from subsection
\ref{Subsec:Stability} in detail.
This is similar to the work of \cite{pettersson2009numerical}.
We start with \eqref{eq:CPR_systemI} and consider
\begin{equation}
\label{eq:CPR_systemI_a}
\begin{aligned}
  \partial_t \vec{u}
  + \frac{\beta}{2} (\mat{D}\otimes \hat{\I}) A_G \vec{u}
  + (1-\beta) \left( A_G(\mat{D}\otimes \hat{\I})\vec{u}
                   \right)+ \left(\left(\mat{M}[^{-1}] \mat{R}[^T] \mat{B}\right)\otimes \hat{\I}\right) \left(
    \vecfnum - \frac{1}{2} (\mat{R} \otimes \hat{\I} ) A_G \vec{u} \right)&
  =
  0,
\end{aligned}
\end{equation}
where $\vecfnum$ is the numerical flux
and $\vec{u}= \l(u_0( \zeta_0), \dots, u_0(\zeta_p),u_1(\zeta_0), \dots, u_M(\zeta_p)\right)^T$
is the combination vector from SBP CPR and the polynomial chaos method.
Investigating $\L^2$ stability, we multiply \eqref{eq:CPR_systemI} with
$\vec{u}^T (\mat{M}\otimes \hat{\I})$. With the SBP property \eqref{eq:SBP}, we get
\begin{equation}
\begin{aligned}
  &
  \frac{1}{2} \frac{\dif}{\dif t} \norm{\vec{u}}_{\mat{M}\otimes \hat{\I}}^2
  =
  \vec{u}^T(\mat{M}\otimes \hat{\I})(\hat{\I} \otimes \mat{\I}) \partial_t \vec{u}
  \\
  =&
  -\frac{\beta}{2} \vec{u}^T(\mat{M}\otimes \hat{\I}) (\mat{D}\otimes \hat{\I}) A_G \vec{u}
  - (1-\beta) \vec{u}^T (\mat{M}\otimes \hat{\I}) A_G (\mat{D}\otimes \hat{\I} ) \vec{u}
  \\&
  - \vec{u}^T(\mat{M}\mat{M}[^{-1}] \mat{R}[^T] \mat{B}\otimes \hat{\I})
  \left( \vecfnum-\frac{1}{2} (\mat{R}\otimes \hat{\I}) A_G \vec{u}) \right)
  \\=&
  -\frac{\beta}{2} \vec{u}^T(\mat{M}\mat{D}\otimes \hat{\I})
  A_G \vec{u}-(1-\beta) \vec{u}^T(\mat{M}\otimes \hat{\I}) A_G (\mat{D}\otimes \hat{\I}) \vec{u}
  -\vec{u}^T( \mat{R}^T\mat{B} \otimes \hat{\I})
  \left(\vecfnum -\frac{1}{2} (\mat{R}\otimes \hat{\I}) A_G \vec{u} \right)
  \\
  =&
  \frac{\beta}{2} \vec{u}^T (\mat{D}^T \mat{M}\otimes \hat{\I}) A_G \vec{u}
  -\frac{\beta}{2} \vec{u}^T (\mat{R}^T \mat{B} \mat{R} \otimes \hat{\I}) A_G \vec{u}
  - (1-\beta) \vec{u}^T (\mat{M}\otimes \hat{\I}) A_G
  (\mat{D}\otimes \hat{\I}) A_G (\mat{D}\otimes \hat{\I}) \vec{u}
  \\&
  -\vec{u}^T (\mat{R}^T \mat{B} \otimes \hat{\I})
  \left( \vecfnum-\frac{1}{2} (\mat{R}\otimes \hat{\I}) A_G \vec{u} \right).
\end{aligned}
\end{equation}
We choose $\beta=\frac{2}{3}$. This yields
\begin{equation}\label{eq:CPR_SystemII}
\begin{aligned}
  \frac{1}{2} \frac{\dif}{\dif t} \norm{u}_{M\otimes \hat{\I}}^2 =& \frac{1}{3}\vec{u}^T(\mat{D}^T \otimes \hat{\I}) (\mat{M}\otimes \hat{\I})A_G
  \vec{u} -\frac{1}{3} \vec{u}^T  (\mat{R} ^T \mat{B} \mat{R} \otimes \hat{\I}) A_G \vec{u}\\
  &- \frac{1}{3} \vec{u}^T (\mat{M}\otimes \hat{\I}) A_G (\mat{D}\otimes \hat{\I}) A_G (\mat{D}\otimes \hat{\I}) \vec{u}
  -\vec{u}^T (\mat{R}^T \mat{B} \otimes \hat{\I}) \left( \vecfnum-\frac{1}{2} (\mat{R}\otimes \hat{\I}) A_G  \vec{u}\right).
\end{aligned}
\end{equation}
$A_G$ commutes with $\mat{M}\otimes \hat{\I}$. This means that
\begin{equation}\label{A_Gkommutiert}
 A_G= (\mat{M}\otimes \hat{\I}) A_G (\mat{M}^{-1}\otimes \hat{\I}).
\end{equation}
Applying this fact, we get
\begin{equation}
\begin{aligned}
 \frac{1}{3}\vec{u}^T(\mat{D}^T \otimes \hat{\I}) (\mat{M}\otimes \hat{\I})A_G u &=\frac{1}{2 } \left( (\mat{D}\otimes \hat{\I}) \vec{u}
 \right)^T (\mat{M}\otimes \hat{\I}) A_G \vec{u} =\frac{1}{3} \left[(A_Gu)^T(\mat{M}\otimes \hat{\I})^T (\mat{D}\otimes \hat{\I}) \vec{u})\right]^T\\
 &= \frac{1}{3}\vec{u}^T(\mat{M}\otimes \hat{\I}) A_G (\vec{D}\otimes \hat{\I}) \vec{u}.
\end{aligned}
\end{equation}
Finally, we employ this in \eqref{eq:CPR_SystemII} and receive
\begin{equation}\label{eq:CPR_SystemIII}
  \frac{1}{2} \frac{\dif}{\dif t} \norm{u}_{\mat{M}\otimes \hat{\I}}^2 = \frac{1}{6}\vec{u}^T (\mat{R}^T\mat{B} \mat{R}\otimes \hat{\I}) A_G \vec{u} -\vec{u}^T(\mat{R}^T \mat{B} \otimes \hat{\I})
  \vecfnum.
\end{equation}
Inserting the right and left values in one element, we get
 we get
\begin{equation}
  \frac{1}{2}\frac{\dif}{\dif t} \norm{u}^2_{\mat{M} \otimes \hat{\I}}
 =
\frac{1}{6} u^{(e),T}_R A(u^{(e)}_R) u^{(e)}_R -
 \frac{1}{6} u^{(e),T}_LA(u^{(e)}_L)u^{(e)}_L
 +u^{(e),T}_Lf^{\mathrm{num}, e}_L
 -u^{(e),T}_R f^{\mathrm{num}, e}_R. \tag{\ref{Gleichungst}}
\end{equation}

\bibliographystyle{habbrv}
\bibliography{literature}

\end{document}